\def \[{\begin{equation}}
\def \]{\end{equation}}
\newtheorem{thm}{Theorem}[section]
\newtheorem{prop}[thm]{Proposition}
\newtheorem{fact}[thm]{Fact}
\newtheorem{lem}[thm]{Lemma}
\begin{document}
\begin{center}{\Large \bf Spectral radius and the $2$-power of Hamilton cycles}
\vspace{4mm}

{\large Xinru Yan, \ \ Xiaocong He, \ \ Lihua Feng\footnote{Corresponding author.  \\ \hspace*{5mm}{\it Email addresses}: 2917251789@qq.com (X.R. Yan)\ \ fenglh@163.com (L.H. Feng)}, \ \ Weijun Liu.}
\vspace{3mm}

School of Mathematics and Statistics, HNP-LAMA,  Central South University, New Campus, \\
Changsha, Hunan, 410083, PR China\\[5pt]
\end{center}


\noindent {\bf Abstract}: \
Let $G$ be a graph of order $n$ and spectral radius be the largest eigenvalue of its adjacency matrix, denoted by $\mu(G)$. In this paper, we determine the unique graph with maximum spectral radius among all graphs of order $n$ without containing the $2$-power of a Hamilton cycle.

\vspace{2mm} \noindent{\bf Keywords}: $2$-power of graphs; Hamilton cycle; Spectral radius.

\vspace{2mm} \noindent{\bf AMS classification}: 05C50, 05C35
\vspace{2mm}

\setcounter{section}{0}
\section{\normalsize Introduction}\setcounter{equation}{0}
We will start with introducing some background information that will lead to our main results. Some important previously established facts will also be presented.
\subsection{\normalsize Background}
Let $G=(V(G),E(G))$ be a simple graph and $\mu(G)$ be the largest eigenvalue of its adjacency matrix, where $V(G)$ and $E(G)$ are vertex set and edge set, respectively. For $u, v \in V(G)$, the distance between $u$ and $v$ is defined to be the number of edges of a shortest path from $u$ to $v$. We write $e(G)$ for $|E(G)|$, $d_G(u)$ stands for the degree of the vertex $u$ in $G$, and $\delta(G)$ stands for the minimum degree in $G$. We use $K_n$, $S_n$, $P_n$ and $C_n$ to denote the complete graph, the star, the path and the cycle of order $n$, respectively. Note that $C_n$ is also called a Hamilton cycle of $G$ with order $n$. The join of two disjoint graphs $G_1$ and $G_2$, denoted by $G_1 \vee G_2$, is the graph obtained from $G_1 \cup G_2$ by joining each vertex of $G_1$ to each vertex of $G_2$. For odd number $n$, we use $F_n$ to stand for a union of $\lfloor {\frac{n}{2}} \rfloor$ triangles sharing a single common vertex. $K_{a, b}$ denotes the complete bipartite graph with vertex partition sets of sizes $a$ and $b$. Let $S_{n, k}$ be the graph obtained by joining each vertex of $K_{k}$ to $n-k$ isolated vertices. A wheel $W_{n}$ is the graph obtained by joining $C_{n-1}$ with an additional vertex. Let $G+e$ denote the graph obtained from $G$ by adding an edge $e$ between a pair of non-adjacent vertices. Let $G^{-}$ denote the set of graphs obtained from $G$ by deleting any edge and $G^{+}$ denote the set of graphs obtained from $G$ by adding a new vertex and joining it to any one vertex of $G$. If there is only one non-isomorphic graph in $G^{-}$ or $G^{+}$, then we also use $G^{-}$ or $G^{+}$ to denote this unique graph. Also, $G^{+t}$ denotes the set of graphs obtained from $G$ by adding a new vertex and joining it to any $t$ vertices of $G$. Let $\overline{G}$ denote the complement graph of $G$, where any two vertices in $\overline{G}$ are adjacent if they are not adjacent in $G$. The $k$-power of a graph $G$, denoted by $G^{k}$, is another graph that has the same set of vertices, but in which two vertices are adjacent when their distance in $G$ is at most $k$. Let $H$ be any subgraph of $G$. Then $G \backslash E(H)$ denotes the graph obtained from $G$ by deleting edges of $H$. We adopt the notation and terminologies in \cite{bondy, 19, 6, 18, 20} except as stated otherwise.

Let $\mathcal{K}(n, t)$ be the set of graphs on $n$ vertices with $t$ edges. Let $M_n$ be the $2n$-vertex graph on $n$ independent edges. Let $S_{n}^{*}$ be the graph obtained from $S_{n}$ by adding a new vertex and joining it to a leaf of $S_{n}$. Let $T_{a, b, c}$ stand for a $T$-shaped tree defined as a tree with a single vertex $u$ of degree $3$ such that $T_{a, b, c}-u=P_{a} \cup P_{b} \cup P_{c}~(a \leqslant b \leqslant c)$. Similarly, $T_{a, b, c, d}$ stands for a $T$-shaped tree defined as a tree with a single vertex $u$ of degree $4$ such that $T_{a, b, c, d}-u=P_{a} \cup P_{b} \cup P_{c} \cup P_{d}~(a \leqslant b \leqslant c \leqslant d)$ and $T_{a, b, c, d, e}$ can be defined in the same way. Let $D_n$ be the double-snake of order $n$, depicted in Figure 1. Let $\mathcal{L}_{n}$ be a set of graphs with $n \geqslant 9$ and the graphs in $\mathcal{L}_{n}$ are depicted in Figure 2.
\begin{figure}[h!]\label{fig1}
\begin{center}
\psfrag{1}{$\cdots$}\psfrag{2}{$D_n$}
\includegraphics[width=60mm]{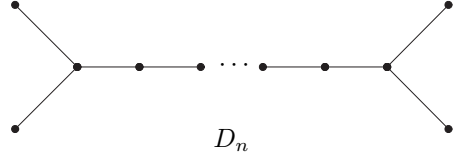} \\
  \caption{The Double-snake.}
\end{center}
\end{figure}

\begin{figure}[h!]\label{fig2}
\begin{center}
\psfrag{1}{$\cdots$}\psfrag{2}{$n-2$}\psfrag{3}{$n-3$}\psfrag{4}{$n-4$}\psfrag{5}{$n-5$}\psfrag{6}{$n-6$}\psfrag{7}{$n-7$}\psfrag{8}{$n-8$}
\psfrag{a}{$S_{n-4}\bigcup S_4$}\psfrag{b}{$S_{n-4}\bigcup K_3$}\psfrag{c}{$S_{n-1}$}\psfrag{d}{$S^*_{n-2}$}\psfrag{e}{$S_{n-2}+e$}\psfrag{f}{$S_{n-2}\bigcup K_2$}
\includegraphics[width=160mm]{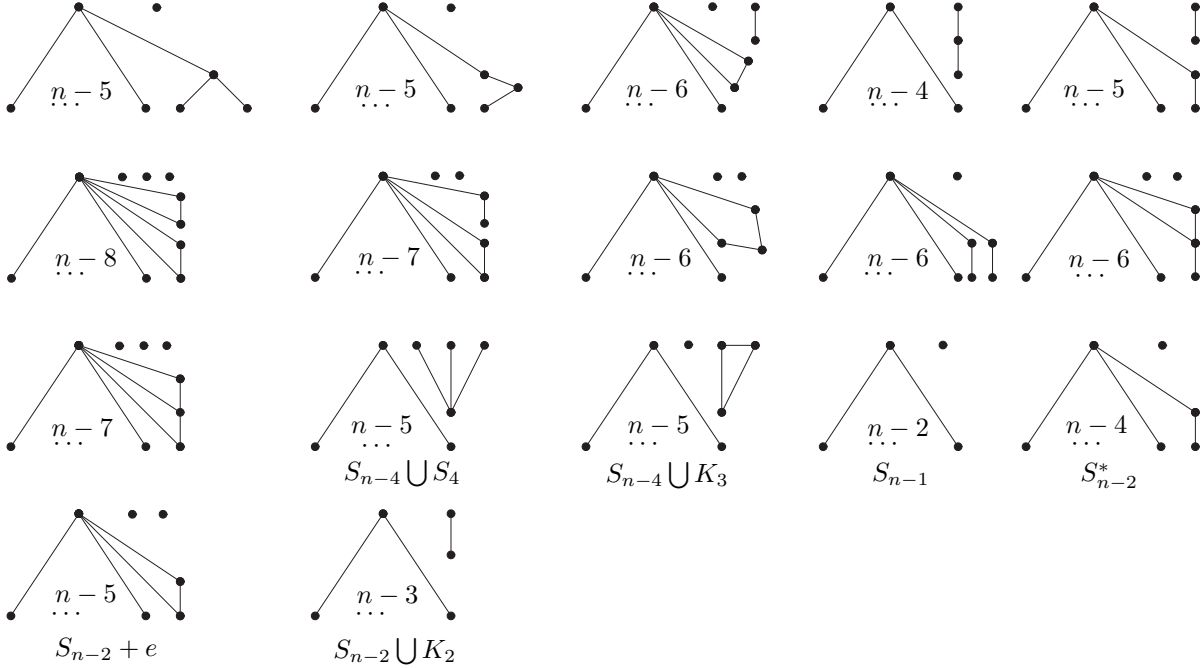} \\
  \caption{The graphs belong to the set $\mathcal{L}_n$.}
\end{center}
\end{figure}
For an integer $k \geqslant 0$, the $k$-closure of the graph $G$ is a graph obtained from $G$ by successively joining pairs of nonadjacent vertices whose degree sum is at least $k$ (in the resulting graph at each stage) until no such pair remains \cite{5, 17}. Write $\mathcal{C}_{k}(G)$ for the $k$-closure of $G$. Note that $d_{\mathcal{C}_{k}(G)}(u)+d_{\mathcal{C}_{k}(G)}(v) \leqslant k-1$ for any pair of nonadjacent vertices $u$ and $v$ in $\mathcal{C}_{k}(G)$.

For short, we omit the isolated vertices of graphs in this paper. For example, in the case $n=6$, $M_{2}$ should be $M_{2} \cup \overline{{K}_{2}}$ with minimum degree one (without considering isolated vertices). Similarly, $S_{n-4} \cup K_{3}$ should be $S_{n-4} \cup K_{3} \cup K_{1}$ with minimum degree one.

A well known theorem of Dirac \cite{dirac} states that if $G$ is a graph on $n$ vertices with $\delta(G)\geq\frac{n}{2}$, then $G$ contains a hamiltonian cycle. In 1963, Pos\'{a} (see also in \cite{erdos} by Erd{\"o}s ) conjectured that if $\delta(G)\geq\frac{2n}{3}$, then $G$ contains a $C_n^2$. Later in 1974, Seymour \cite{sey} generalized Pos\'{a}'s conjecture and conjectured that if $\delta(G)\geq\frac{k}{k+1}n$, then $G$ contains a $C_n^k$. Using the Regularity Lemma and Blow-up Lemma, Koml\'{o}s, S\'{a}rk{\"o}zy and Szemer\'{e}di \cite{k1} proved Seymour's conjecture in asymptotic form, then in \cite{k2,k3} they proved both conjectures for $n\geq n_0$. Later Levitt, S\'{a}rk{\"o}zy and Szemer\'{e}di \cite{L} presented another proof (and a general method) that avoids the use of the Regularity Lemma and thus the resulting $n_0$ is much smaller. Fan and H{\"a}ggkvist \cite{f1} proved that if $\delta(G)\geq\frac{5n}{7}$, then $G$ contains a $C_n^2$. Fan and Kierstead \cite{f2} showed that for every $\epsilon\geq0$, there exists a constant $c$, such that if $\delta(G)\geq(\frac{2}{3}+\epsilon)n+c$, then $G$ contains a $C_n^2$. Next they \cite{f3} showed that if $\delta(G)\geq\frac{2n-1}{3}$, then $G$ contains a $P_n^2$. For more results about the existence of $C_n^k$ in graphs, we refer the reader to \cite{fau,kie}.

A central problem of extremal graph theory is as following: for a given graph $H$, what is the maximum number of edges of an $H$-free graph of order $n$? This question and its extensions are called Tur\'{a}n type problems (for example, \cite{3, 14}). Moreover, much attention has been paid to spectral Tur\'{a}n type problems, i.e., what is the maximum (signless Laplacian, $p$-Laplacian) spectral radius of an $H$-free graph of order $n$ (see for example, \cite{4, 19, 16, 20})? In recent years, some attention has been given to the relations between (signless Laplacian, $p$-Laplacian) spectral radius and cycles of fixed length and particularly Hamilton cycle; see \cite{13, 4, gao, 11, 15, ni, 7}. Yuan \cite{3} determined the maximum number of edges of a graph without containing the $2$-power of a Hamilton cycle which extends a well-known theorem of Ore \cite{14} concerning the maximum number of edges of a graph without containing a Hamilton cycle. Motivated by \cite{3} directly, it is worth to focus on the spectral Tur\'{a}n type problems, i.e., what is the maximum (signless Laplacian, $p$-Laplacian) spectral radius of a $C_n^2$-free graph of order $n$? In this paper, we determine the unique graph with maximum spectral radius
among all graphs of order $n$ without containing the 2-power of a Hamilton cycle. On the other hand, in this paper, we also consider the relationship between the complement of a graph $G$ without containing the $2$-power of a Hamilton cycle and its spectral radius $\mu(\overline{G})$, one may be referred to \cite{4, 2, 20}. Other related results can be found in \cite{12, 9, 10, 8, 19, 1, 6, 18, Libinlong1, Libinlong2, Linhuiqiu1, Linhuiqiu2}.

\subsection{\normalsize Main results}
In this article, we determine the unique graph with maximum spectral radius among all graphs of order $n$ without containing the $2$-power of a Hamilton cycle. This extends a theorem of Long-Tu Yuan \cite{3} in 2021 concerning the maximum number of edges of an $n$-vertex graph without containing the $2$-power of a Hamilton cycle. We will establish the following theorems.

\begin{thm}\label{thm1.1}
Let $G$ be graph on $n \geqslant 18$ vertices. If $e(G)>\frac{n^{2}-3n+3}{2}$, then $G$ contains the $2$-power of a Hamilton cycle unless $G$ is a subgraph of $Y_n$, where $Y_n$ is $K_{n} \backslash E\left(S_{n-3}\right)$, $K_{n} \backslash E\left(S_{n-4} \cup S_4\right)$ or $K_{n} \backslash E\left(S_{n-4} \cup K_3\right)$.
\end{thm}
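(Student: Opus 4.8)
The plan is to argue through the complement. Put $H=\overline G$. Since $\binom n2-\frac{n^{2}-3n+3}{2}=\frac{2n-3}{2}$, the hypothesis $e(G)>\frac{n^{2}-3n+3}{2}$ gives $e(H)\le n-2$; moreover each of the three candidate graphs, call it $Y_n$, satisfies $\overline{Y_n}\in\{S_{n-3},\,S_{n-4}\cup S_4,\,S_{n-4}\cup K_3\}$, so ``$G$ is a subgraph of $Y_n$'' means exactly ``$H$ contains one of $S_{n-3}$, $S_{n-4}\cup S_4$, $S_{n-4}\cup K_3$ as a subgraph''. It therefore suffices to show: if $e(H)\le n-2$ and $H$ contains none of these three graphs, then $G=\overline H$ contains $C_n^{2}$. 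Observe that $H\not\supseteq S_{n-3}$ already forces $\Delta(H)\le n-5$, i.e. $\delta(G)\ge 4$, so from now on $\delta(G)\ge 4$. (If instead $\delta(G)\le 3$, then $H$ has a vertex of degree $\ge n-4$, hence $H\supseteq S_{n-3}$ and $G\subseteq Y_n$ directly — consistent with the fact that such a $G$ is trivially $C_n^{2}$-free, $C_n^{2}$ being $4$-regular.)

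The first case is $\Delta(H)=n-5$. Let $u$ be a vertex with $d_H(u)=n-5$. A short count using $e(H)\le n-2$ shows that $u$ is the \emph{only} vertex of $H$ of degree $n-5$, that the $n-5$ edges at $u$ together with at most $3$ further edges exhaust $E(H)$, and hence that $F:=H\big[\,V\setminus N_H[u]\,\big]$ is a graph on exactly $4$ vertices with $e(F)\le 3$. Since $u$ is the only vertex that can be the centre of an $S_{n-4}$ in $H$, the assumption that $H$ contains neither $S_{n-4}\cup S_4$ nor $S_{n-4}\cup K_3$ is equivalent to $F$ containing neither $K_{1,3}$ nor $K_3$; conversely, if $F$ did contain one of these, the edge count would pin $H$ down as exactly $S_{n-4}\cup S_4$ or $S_{n-4}\cup K_3$, contrary to assumption. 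So $F$ is a $4$-vertex graph with at most $3$ edges containing no $K_{1,3}$ and no $K_3$; checking the finitely many such $F$ shows that $\overline F$ always contains a $P_4$, equivalently $G[N_G(u)]$ has a spanning path $v_1v_2v_3v_4$. I would then build $C_n^{2}\subseteq G$ explicitly: put $u$ at a vertex of the target cycle with $v_1,v_2,v_3,v_4$ at its four nearest positions (this realises every edge at $u$ and every short chord through $u$), and extend around the cycle through the remaining $n-5$ vertices, which in $G$ span a clique with at most $3$ edges removed. Each removed edge, and each of the few ``endpoint'' constraints linking this clique-part to the $v_i$, forbids only a bounded number of placements while $n-5\ge 13$; so a legal cyclic order exists, giving $C_n^{2}\subseteq G$.

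The remaining case, $\Delta(H)\le n-6$ (that is, $\delta(G)\ge 5$), is the heart of the matter, and here one must produce $C_n^{2}$ in $G$ essentially from scratch. The crucial feature is that $H$ is very sparse: from $2e(H)\le 2n-4$ only a bounded number of vertices can have $d_H(v)$ of order $n$, so all but a bounded set $B$ of vertices have $d_G(v)\ge \tfrac{2n}{3}$, while every vertex still has $d_G(v)\ge 5$. I would invoke (or prove directly) a Seymour/P\'osa-type sufficient condition for the square of a Hamilton cycle that tolerates a bounded number of exceptional low-degree vertices: embed the bulk of $C_n^{2}$ in the high-degree part and weave the vertices of $B$ in one at a time, using that each inserted vertex misses only a few vertices of $G$. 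Equivalently one may first delete the vertices isolated in $H$ — each is universal in $G$, and a universal vertex can always be reinserted between two consecutive vertices of an existing square Hamilton cycle — reducing either to a case with many universal vertices in $G$ (handled at once) or to a small, easily controlled sparse complement. \textbf{Making the weaving/insertion step rigorous, together with the finitely many genuinely small configurations, is the main technical obstacle, and is where the assumption $n\ge 18$ is needed.}
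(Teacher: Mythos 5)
Your complement reformulation is exactly right: with $H=\overline G$ the hypothesis gives $e(H)\le n-2$, and the theorem is equivalent to the assertion that every such $H$ embeds into $\overline{C_n^2}$ unless $H$ contains one of $S_{n-3}$, $S_{n-4}\cup S_4$, $S_{n-4}\cup K_3$. This is also how the paper proceeds: the range $e(H)\le n-4$ is Theorem \ref{thm1.4}, $e(H)=n-3$ is Lemma \ref{lem1.12}, and $e(H)=n-2$ is Lemma \ref{lem1.14}; those lemmas are proved by induction on $n$ using the extension device of Proposition \ref{prop1.5} (splitting on $\delta(F)$ and absorbing a vertex of small degree into an embedding in $\overline{C_{n-1}^2}$), with Fact \ref{fact1.7} handling the dense bounded-size configurations and a long finite check for $n\le 18$. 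Your treatment of the case $\Delta(H)=n-5$ is essentially sound: the observation that a $4$-vertex graph with at most $3$ edges avoiding $K_{1,3}$ and $K_3$ has a $P_4$ in its complement is correct, and the ordering of the remaining $n-5$ near-clique vertices can plausibly be completed greedily, though you leave the boundary constraints and the edges of $H$ meeting $N_G(u)$ unverified.

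The genuine gap is the case $\Delta(H)\le n-6$, which you yourself flag as ``the main technical obstacle'': this is not a technicality but the entire content of the theorem, namely that the $(n-5)$-regular graph $\overline{C_n^2}$ contains every $n$-vertex graph with at most $n-2$ edges and maximum degree at most $n-6$. The tools you propose to invoke are not available in this range of $n$: the Koml\'os--S\'ark\"ozy--Szemer\'edi proof of P\'osa's conjecture holds only for $n\ge n_0$ with $n_0$ enormous, Fan--H\"aggkvist needs $\delta\ge 5n/7$, and Fan--Kierstead needs $\delta\ge(2/3+\epsilon)n+c$; none of these tolerates the up to five vertices of $G$-degree below $2n/3$ that your exceptional set $B$ can contain, and none applies for all $n\ge18$. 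The fallback of deleting $H$-isolated vertices also fails: a graph with $n-2$ edges can have no isolated vertices at all (e.g.\ a spanning forest with two components), so $G$ may have no universal vertex to peel off, and the ``weaving'' of several vertices each requiring four prescribed neighbours arranged as a $P_4$ is never carried out. In short, you have correctly reduced the theorem to the key embedding statement but have not proved it; something like the paper's induction via Proposition \ref{prop1.5} and Lemma \ref{lem1.6}, with its accompanying finite case analysis, is indispensable here.
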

\begin{thm}\label{thm1.2}
Let $G$ be graph on $n \geqslant 18$ vertices. If $\mu(G) \geqslant \mu\left(K_{n} \backslash E\left(S_{n-3}\right)\right)$, then $G$ contains the $2$-power of a Hamilton cycle unless $G=K_{n} \backslash E\left(S_{n-3}\right)$.
\end{thm}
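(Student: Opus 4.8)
The plan is to derive Theorem~\ref{thm1.2} from the edge-extremal result Theorem~\ref{thm1.1}, by combining a spectral lower bound on the number of edges with an eigenvalue comparison among the three exceptional graphs produced by Theorem~\ref{thm1.1}. Throughout, write $\mu_0 := \mu\!\left(K_n\backslash E(S_{n-3})\right)$ and let $G$ be a graph on $n\ge 18$ vertices with $\mu(G)\ge\mu_0$; we must show that $G$ contains $C_n^2$ or $G=K_n\backslash E(S_{n-3})$.

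First I would record the elementary fact that $\mu_0>n-2$: the $n-1$ vertices of $K_n\backslash E(S_{n-3})$ other than the centre of the deleted star form a clique, so $K_n\backslash E(S_{n-3})$ is connected and properly contains $K_{n-1}$, whence $\mu_0>\mu(K_{n-1})=n-2$. Consequently $\mu(G)>n-2$, and since a disconnected graph on $n$ vertices has spectral radius at most $n-2$, $G$ is connected. Hong's inequality $\mu(G)^2\le 2e(G)-n+1$ for connected graphs then yields $2e(G)\ge\mu_0^2+n-1>(n-2)^2+n-1=n^2-3n+3$, i.e. $e(G)>\tfrac{n^2-3n+3}{2}$. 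Now Theorem~\ref{thm1.1} applies: either $G$ contains the $2$-power of a Hamilton cycle (and we are done), or $G$ is a spanning subgraph of one of $Y^{(1)}:=K_n\backslash E(S_{n-3})$, $Y^{(2)}:=K_n\backslash E(S_{n-4}\cup S_4)$, $Y^{(3)}:=K_n\backslash E(S_{n-4}\cup K_3)$.

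In the first case, monotonicity of the adjacency spectral radius gives $\mu(G)\le\mu(Y^{(1)})=\mu_0\le\mu(G)$, so $\mu(G)=\mu_0$; since $Y^{(1)}$ is connected, the equality case of monotonicity forces $G=Y^{(1)}=K_n\backslash E(S_{n-3})$, as desired. It therefore remains to exclude the other two cases, and for this it suffices to prove the strict inequalities $\mu\!\left(K_n\backslash E(S_{n-4}\cup S_4)\right)<\mu_0$ and $\mu\!\left(K_n\backslash E(S_{n-4}\cup K_3)\right)<\mu_0$, each of which would contradict $\mu(G)\ge\mu_0$. I would carry this out using equitable partitions: each of the three graphs has an equitable partition into at most four vertex classes (for $Y^{(1)}$ the classes are the $3$ vertices of degree $n-1$, the $n-4$ vertices of degree $n-2$, and the lone vertex of degree $3$; for $Y^{(2)},Y^{(3)}$ one additionally splits off the two low-degree "star/triangle centres"). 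The spectral radius of each graph equals the Perron root of the associated $3\times3$ or $4\times4$ quotient matrix $B_i$, so the comparison reduces to comparing the largest roots of the characteristic polynomials $\det(tI-B_i)$; concretely, one first localizes $\mu_0$ to an interval of the form $(n-2,\,n-2+c/n)$ and then checks that the (monic) characteristic polynomials of the $Y^{(2)},Y^{(3)}$-quotient matrices are positive on $[\mu_0,\infty)$, equivalently that $\mu_0$ already exceeds every eigenvalue of those two graphs. Alternatively, one can dispatch $Y^{(2)}$ and $Y^{(3)}$ at once by noting both are subgraphs of $K_n\backslash E(S_{n-4}\cup S_3)$ and comparing that single graph with $Y^{(1)}$.

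The routine parts are the reduction via Hong's bound and the monotonicity/equality argument; the genuine obstacle is the final eigenvalue comparison among the exceptional graphs. I expect this to be the hard step because, although every quotient matrix involved is small, its entries grow linearly in $n$, and the competing spectral radii differ from one another only by a quantity of order $1/n$; one must therefore extract enough precision on $\mu_0$ (and confirm that $\mu_0$ lies strictly above the second-largest eigenvalue of the $Y^{(2)}$- and $Y^{(3)}$-quotient matrices) so that the sign of the characteristic polynomial evaluated at $\mu_0$ is conclusive.
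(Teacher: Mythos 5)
Your proposal is correct and follows essentially the same route as the paper: Hong's inequality converts the spectral hypothesis into $e(G)>\frac{n^{2}-3n+3}{2}$, Theorem \ref{thm1.1} reduces the problem to the three exceptional graphs, and a Perron--Frobenius comparison of their spectral radii finishes the argument. The only (harmless) differences are that you obtain connectivity directly from $\mu(G)>n-2$ rather than via Theorem \ref{thm1.9}, and your strict chain $2e(G)-n+1\geqslant\mu(G)^{2}\geqslant\mu_{0}^{2}>(n-2)^{2}$ bypasses the equality case of Hong's bound that the paper treats separately; like the paper, you leave the final comparison $\mu\left(K_{n}\backslash E\left(S_{n-4}\cup S_{4}\right)\right),\mu\left(K_{n}\backslash E\left(S_{n-4}\cup K_{3}\right)\right)<\mu\left(K_{n}\backslash E\left(S_{n-3}\right)\right)$ as a (sketched) computation.
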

\begin{thm}\label{thm1.3}
Let $G$ be graph on $n \geqslant 18$ vertices. If $\mu(\overline{G}) \leqslant \sqrt{n-5}$, then $G$ contains the $2$-power of a Hamilton cycle unless $\mathcal{C}_{n}(G)=K_{n}$.
\end{thm}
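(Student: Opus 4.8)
The plan is to prove the stronger statement that the hypothesis $\mu(\overline{G})\leq\sqrt{n-5}$ by itself already forces $\mathcal{C}_{n}(G)=K_{n}$; once this is established the conclusion of the theorem is immediate, the exceptional clause always holding. I would therefore argue by contradiction: suppose $\mu(\overline{G})\leq\sqrt{n-5}$ while $H:=\mathcal{C}_{n}(G)\neq K_{n}$, and set $F:=\overline{H}$. Since forming the $n$-closure only adds edges, $G\subseteq H$, so $F$ is a spanning subgraph of $\overline{G}$ and hence $\mu(F)\leq\mu(\overline{G})\leq\sqrt{n-5}$ by monotonicity of the spectral radius; moreover $F$ has at least one edge because $H\neq K_{n}$. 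The aim is now to contradict $\mu(F)\leq\sqrt{n-5}$.

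The one combinatorial ingredient I would use is the translation of the closure condition into the complement: no two nonadjacent vertices of $H$ have degree sum $\geq n$, and substituting $d_{F}(x)=n-1-d_{H}(x)$ this says that every edge $uv$ of $F$ satisfies $d_{F}(u)+d_{F}(v)\geq n-1$; equivalently, every vertex $v$ of $F$ with $d_{F}(v)=d\geq1$ has all of its $d$ neighbours of degree at least $n-1-d$. Letting $d$ be the smallest positive degree appearing in $F$, I would split into three ranges and in each produce a lower bound on $\mu(F)$ exceeding $\sqrt{n-5}$. If $d\geq\lceil(n-1)/2\rceil$, the subgraph of $F$ induced on its non-isolated vertices has minimum degree $\geq(n-1)/2$, so $\mu(F)\geq(n-1)/2>\sqrt{n-5}$ for $n\geq18$. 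If $1\leq d\leq3$, a vertex of smallest positive degree has a neighbour of degree $\geq n-1-d\geq n-4$, whence $\mu(F)\geq\sqrt{\Delta(F)}\geq\sqrt{n-4}>\sqrt{n-5}$. If $4\leq d\leq\lceil(n-1)/2\rceil-1$, I would use the standard bound $\mu(F)^{2}\geq\frac{1}{n}\sum_{x\in V(F)}d_{F}(x)^{2}$ together with the observation that a minimum-positive-degree vertex together with its $d$ neighbours already contributes at least $d^{2}+d(n-1-d)^{2}$ to the sum, so that it suffices to verify $d^{2}+d(n-1-d)^{2}>n(n-5)$ for all $n\geq18$ and all admissible $d$.

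With a contradiction reached in each case, I would conclude $\mathcal{C}_{n}(G)=K_{n}$, which finishes the proof; this route needs nothing about the $2$-power of a Hamilton cycle, and in particular uses neither Theorem~\ref{thm1.1} nor Theorem~\ref{thm1.2}. The place that needs the most care is the third case: one must choose the right ``second-order'' spectral lower bound — the mean of $d_{F}(x)^{2}$ is the convenient one, since it is fed precisely by the degree data the closure supplies — and then dispose of the polynomial inequality over the whole admissible interval of $d$. The cleanest route for the latter is to note that $d\mapsto d(n-1-d)^{2}$ is unimodal on $[4,(n-1)/2]$ with maximum at $d=(n-1)/3$, so its minimum over the interval occurs at an endpoint; at $d=4$ the inequality becomes $3n^{2}-35n+116>0$, while at the right endpoint $d(n-1-d)^{2}$ alone already exceeds $n(n-5)$, so both endpoints, and hence all admissible $d$, are fine once $n\geq18$.
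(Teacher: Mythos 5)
Your proposal is correct, and it takes a genuinely different route from the paper. The paper assumes $G\not\supseteq C_n^2$ and splits according to whether $\overline{C_n^2}$ contains $\overline{H}$: the first case needs the Tur\'an-type Theorem~\ref{thm1.4} to force $e(\overline{H})\geq n-4$ before the linear estimate $\sum_v d_{\overline{H}}^2(v)\geq (n-1)e(\overline{H})$ combined with Lemma~\ref{lem1.10} yields a contradiction, and the second case invokes Lemma~\ref{lem1.14} together with a characteristic-polynomial comparison of the graphs $H_1,H_2,H_3$. You instead exploit the closure condition $d_{F}(u)+d_{F}(v)\geq n-1$ on every edge of $F=\overline{\mathcal{C}_n(G)}$ through the \emph{quadratic} contribution $d^2+d(n-1-d)^2$ of a single minimum-positive-degree vertex and its neighbourhood (supplemented by $\mu\geq\sqrt{\Delta}$ for $d\leq 3$ and $\mu\geq\delta$ for $d\geq\lceil(n-1)/2\rceil$), which is already far stronger than $n(n-5)$ without any lower bound on $e(F)$; your endpoint/unimodality verification checks out, e.g.\ $3n^2-35n+116$ has negative discriminant and $4(n-5)^2>n(n-5)$ for $n\geq 7$. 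What your approach buys is economy -- it needs neither Theorem~\ref{thm1.4} nor Lemmas~\ref{lem1.12}--\ref{lem1.14}, and it works well below $n=18$. What it also reveals, and what you should state explicitly, is that you are proving the strictly stronger assertion that $\mu(\overline{G})\leq\sqrt{n-5}$ \emph{by itself} forces $\mathcal{C}_n(G)=K_n$: the theorem as stated is then literally true because its exceptional clause always fires, so the spectral hypothesis never directly certifies a $C_n^2$ (and indeed $K_n\backslash E(S_{n-4}\cup S_4)$ attains $\mu(\overline{G})=\sqrt{n-5}$, has complete $n$-closure, and contains no $C_n^2$). The paper's proof only establishes $\mathcal{C}_n(G)=K_n$ under the additional assumption that $G$ contains no $C_n^2$, so your argument, besides being more elementary, sharpens the result -- at the price of making transparent that the statement carries no information about the $2$-power of a Hamilton cycle beyond what completeness of the closure provides.
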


\subsection{\normalsize Preliminaries}
In this {subsection}, we firs recall some important known results and then present a few technical lemmas which will be used in the proofs of our main results.

\begin{thm}[\cite{3}]\label{thm1.4}
Let $G$ be an $n$-vertex graph with $n \geqslant 6$ and without containing the $2$-power of a Hamilton cycle. Then we have $e(G) \leq \begin{cases}25, ~~\quad\quad\quad n=8; \\ 49, ~~\quad\quad\quad n=11; \\ C_{n-1}^{2}+3, \quad otherwise.\end{cases}$ Moreover, the equality holds if and only if $G=K_{n} \backslash E\left(F\right)$ with $F \in \mathcal{H}_{n}$. where $\mathcal{H}_{n}$ is a family of graphs with $n \geqslant 6$ as follows:
$$\mathcal{H}_{n}=\begin{cases}S_{3}, & n=6; \\ S_{4}, K_{3}, & n=7; \\ K_{3}, & n=8; \\ K_{4}^{-}, S_{6}, & n=9; \\ K_{4}, S_{7}, & n=10; \\ K_{4}, & n=11; \\  S_{9}, & n=12; \\ S_{10}, & n=13; \\ S_{11}, K_{5}, & n=14; \\ S_{n-3}, & n\geqslant15. \end{cases}$$
\end{thm}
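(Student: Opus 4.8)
The plan is to pass to the complement $H:=\overline{G}$ and exploit the fact that an embedding of $C_n^2$ into $G$ is precisely a cyclic ordering $u_1,\dots,u_n$ of $V(G)$ in which no two vertices at cyclic distance $1$ or $2$ are adjacent in $H$; call such an ordering \emph{dispersing}. The theorem is then equivalent to the assertion that if $e(H)<n-4$ (with the obvious modifications at $n=8,11$) then $H$ admits a dispersing ordering, and that the graphs $H$ of the extremal size with no dispersing ordering are exactly the members of $\mathcal{H}_n$ together with some isolated vertices. Two cheap obstructions organize the whole argument: (i) a vertex $v$ with $d_G(v)\le 3$ lies on no copy of $C_n^2$, so $\Delta(H)\ge n-4$ is fatal --- this is the source of the stars $S_{n-3}$; and (ii) $\alpha(C_n^2)=\lfloor n/3\rfloor$, so a clique of size $\lfloor n/3\rfloor+1$ in $H$ is fatal --- this is the source of the entries $K_3$, $K_4^-$, $K_4$, $K_5$ and of the sporadic behaviour at $n=8,11,14$. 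What the theorem really says is that once $e(H)$ is this small, nothing else can obstruct a dispersing ordering.

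First I would dispose of the large-degree case. If $\Delta(H)\ge n-4$, pick $v$ attaining it; then $G$ is a spanning subgraph of $K_n\backslash E(S_{n-3})$ with $v$ the centre of the deleted star, hence $e(G)\le\binom{n-1}{2}+3$, with equality forcing $d_G(v)=3$ and $G=K_n\backslash E(S_{n-3})$, and obstruction (i) shows this $G$ is indeed $C_n^2$-free. For the small values of $n$ one just compares this value with the bound from the next step.

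Second, assume $\Delta(H)\le n-5$, so $\delta(G)\ge 4$, and recall $e(H)<n-4$. The heart of the proof is to build a dispersing ordering greedily: process the components of $H$ in non-increasing order of size and insert their vertices into the $n$ cyclic slots so that every $H$-edge is stretched to cyclic distance $\ge3$. Since each vertex has at most $\Delta(H)$ forbidden partners but four cyclic-neighbour slots, and there are $n$ slots against fewer than $n$ edges to disperse, a counting / Hall-type estimate shows the placement succeeds unless the component orders are wildly lopsided --- a near-spanning star (yielding $S_{n-3}$), a large star alongside a small dense gadget on $3$ or $4$ vertices ($K_3$, $S_4$, $K_4^-$, \dots), or a clique of size $\lfloor n/3\rfloor+1$ forcing obstruction (ii). Pinning down exactly when the greedy/counting step breaks down produces the finite list $\mathcal{H}_n$, and each surviving candidate $H$ is then shown by hand to block every cyclic ordering --- by the slot count around its high-degree vertex, via $\alpha(C_n^2)=\lfloor n/3\rfloor$, or, at $n=9$, by the impossibility of placing four pairwise-far points on $C_9^2$, which is exactly why $K_4^-$ occurs.

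The hard part is that last step: turning the ``there is ample room'' heuristic into a rigorous classification of obstructions requires a careful case analysis on the number and orders of the components of $H$ and, within a component, on its precise shape (star, triangle, path, $K_4^-$, \dots). In addition, the sporadic values $n\in\{6,\dots,14\}$ --- and especially the anomalies $n=8,11$, where obstruction (ii) overtakes obstruction (i) --- have to be settled essentially by direct inspection.
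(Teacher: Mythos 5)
First, note that Theorem \ref{thm1.4} is quoted from \cite{3} and the present paper gives no proof of it; the only evidence of the intended argument visible here is the machinery imported from \cite{3} (Proposition \ref{prop1.5}, Lemma \ref{lem1.6}, Fact \ref{fact1.7}), which proceeds by induction on $n$: one classifies the complement $F=\overline{G}$ by its minimum degree, uses the $F^{+}$ and $F^{+t}$ insertion operations to reduce an embedding of $F$ into $\overline{C_n^2}$ to an embedding into $\overline{C_{n-1}^2}$, and settles the base cases and the residual high-minimum-degree graphs by hand (this is exactly the template of Lemmas \ref{lem1.11}--\ref{lem1.14}). Your framing is sound as far as it goes: the reduction to embedding $H=\overline{G}$ into $\overline{C_n^2}$, the degree obstruction $\Delta(H)\geqslant n-4$ explaining the stars $S_{n-3}$, and the independence-number obstruction $\alpha(C_n^2)=\lfloor n/3\rfloor$ (together with Fact \ref{fact1.7}) explaining the cliques and the anomalies at $n=8,11$ are all correct, and your first step (the case $\Delta(H)\geqslant n-4$) is a complete and correct argument.

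The genuine gap is the second step, which is the entire content of the theorem. The assertion that a greedy insertion of the components of $H$ into the $n$ cyclic slots succeeds by ``a counting / Hall-type estimate'' unless $H$ is one of the listed exceptions is not a proof, and the naive count does not close: when a vertex $v$ is inserted, each already-placed $H$-neighbour of $v$ forbids the four slots at cyclic distance $\leqslant 2$ from it, so a vertex of degree close to $\Delta(H)\leqslant n-5$ can a priori forbid far more than $n$ slots, and conversely the feasibility of a partial placement depends on which slots are occupied, not merely on how many. Making this rigorous is precisely what forces either the inductive $F^{+t}$ mechanism of Proposition \ref{prop1.5} or an equally heavy direct case analysis, and it is also where the exact exceptional list $\mathcal{H}_n$ (e.g.\ why $S_{5,2}$ obstructs at $n=10$ but nothing analogous survives at $n\geqslant 15$) has to emerge; you defer all of this to ``careful case analysis'' and ``direct inspection'' without carrying it out. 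As written, the proposal is a correct reduction plus a plausibility heuristic for the main embedding lemma, not a proof of it.
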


\begin{prop}[\cite{3}]\label{prop1.5}
Let $n \geqslant 6$. If $\overline{C_{n-1}^{2}}$ contains a copy of $F$ with $n-1$ vertices, then\\
\rm (i) $\overline{C_{n}^{2}}$ contains each graph in $F^{+}$ as a subgraph.\\
\rm (ii) Let $\left\lceil\frac{n-1}{4}\right\rceil-1=t \geqslant 1$. Then $\overline{C_{n}^{2}}$ contains each graph in $F^{+t}$ as a subgraph.
\end{prop}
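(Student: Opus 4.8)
The plan is to argue directly with the cyclic labelling of the cycle, using one elementary fact: inserting a new vertex into a cycle never decreases a cyclic distance. Label the vertices of $C_{n-1}$ by $\mathbb{Z}_{n-1}=\{0,1,\dots,n-2\}$ in cyclic order, so that two vertices are adjacent in $\overline{C_{n-1}^{2}}$ precisely when their cyclic distance is at least $3$ (this description is valid since $n-1\ge 5$), and label $C_n$ similarly. A copy of $F$ in $\overline{C_{n-1}^{2}}$ is a map $\phi\colon V(F)\to\mathbb{Z}_{n-1}$, which is a bijection since $|V(F)|=n-1$, carrying each edge of $F$ to a pair of positions at cyclic distance $\ge 3$. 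For each $b\in\mathbb{Z}_{n-1}$, realise $\overline{C_n^2}$ up to isomorphism on the vertex set $\mathbb{Z}_{n-1}\cup\{u\}$ by taking the $n$-cycle with cyclic order $(0,1,\dots,b,u,b+1,\dots,n-2)$, squaring it, and complementing. Two observations drive everything. First: for old vertices $i,j$, the two arcs joining them in the $(n-1)$-cycle become, in the $n$-cycle, one arc of the same length and one arc longer by one (the one through the insertion gap), so their cyclic distance does not drop; hence $\phi$ is still a copy of $F$ inside this $\overline{C_n^2}$. Second: in the $n$-cycle, $u$ has cyclic distance at most $2$ to exactly the four old vertices $b-1,b,b+1,b+2$ (indices mod $n-1$), so in $\overline{C_n^2}$ the vertex $u$ is adjacent to every old vertex outside that set of four.

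To prove (ii), fix an arbitrary $H\in F^{+t}$, say $H$ is $F$ together with a new vertex adjacent to a prescribed set $S\subseteq V(F)$ with $|S|=t$. By the two observations it suffices to choose the insertion point $b$ so that $\{b-1,b,b+1,b+2\}\cap\phi(S)=\varnothing$: then $\phi$ extends to an embedding of $H$ into $\overline{C_n^2}$ by sending the new vertex of $H$ to $u$ (extra edges from $u$ to other old vertices are harmless, since we only need subgraph containment). Such a $b$ exists if and only if $\mathbb{Z}_{n-1}\setminus\phi(S)$ contains four consecutive positions; but the $t$ positions of $\phi(S)$ break the remaining $n-1-t$ positions into at most $t$ cyclic runs, so the longest run has at least $\lceil (n-1-t)/t\rceil$ positions, and this is $\ge 4$ precisely when $t<(n-1)/4$. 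Since $t=\lceil (n-1)/4\rceil-1$ does satisfy $t<(n-1)/4$, the proof of (ii) is complete. Part (i) is the special case $t=1$, which is permissible because $1<(n-1)/4$ for every $n\ge 6$.

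I do not expect a genuine obstacle here; the steps that need care are the two-arc comparison proving the first observation, the correct identification in the second observation of the four ``close'' vertices of $u$ after the relabelling, and matching the arithmetic threshold, namely checking that $\lceil (n-1-t)/t\rceil\ge 4\iff t<(n-1)/4$ together with $\lceil (n-1)/4\rceil-1<(n-1)/4$. It is also worth recording at the outset that $\overline{C_m^2}$ genuinely has the stated ``cyclic distance $\ge 3$'' form for all $m$ arising here ($m=n-1$ and $m=n$, with $n\ge 6$), and that assuming $\phi$ bijective loses nothing: had $F$ fewer than $n-1$ vertices, there would only be more free positions at which to insert $u$.
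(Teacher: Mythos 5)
The paper does not prove Proposition \ref{prop1.5}; it is imported from \cite{3} without argument, so there is no internal proof to compare against. Your blind proof is correct and self-contained. The two observations are both right: inserting $u$ into the cycle turns the two arcs between old vertices $i,j$ into one arc of unchanged length and one longer by one, so cyclic distances (hence non-edges of $C_n^2$, hence edges of $\overline{C_n^2}$) are preserved among old vertices; and $u$ is within cyclic distance $2$ of exactly the four old positions $b-1,b,b+1,b+2$. The pigeonhole step is also sound: $t$ occupied positions leave at most $t$ cyclic runs totalling $n-1-t$ free positions, so a run of length $\geqslant\lceil (n-1-t)/t\rceil\geqslant 4$ exists whenever $t<(n-1)/4$, and $t=\lceil (n-1)/4\rceil-1$ always satisfies this (as does $t=1$ for $n\geqslant 6$, giving part (i)). The only caveat worth stating explicitly is that the conclusion for $F^{+t}$ requires $u$ to be adjacent in $\overline{C_n^2}$ to \emph{all} of $\phi(S)$, which your choice of $b$ guarantees; you handle this correctly. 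This is a clean, elementary replacement for the cited external proof.
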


\begin{lem}[\cite{3}]\label{lem1.6}
Let $n \geqslant 6$. If $n \neq 8, 11$, then $\overline{C_{n}^{2}}$ contains each copy of $F \in \mathcal{K}(n, n-4) \backslash \mathcal{H}_{n}$. If $n=8, 11$, then $\overline{C_{n}^{2}}$ contains each copy of $F \in \mathcal{K}(n, n-5) \backslash \mathcal{H}_{n}$.
\end{lem}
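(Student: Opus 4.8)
I would prove this by induction on $n$, with Proposition~\ref{prop1.5} as the lifting device and the small cases $6\le n\le 14$ (where $\mathcal H_n$ has not yet stabilised) handled by direct inspection. Two elementary facts about $H:=\overline{C_n^2}$ are used throughout: $H$ is $(n-5)$-regular, and $\omega(H)=\alpha(C_n^2)=\lfloor n/3\rfloor$. Hence $H$ contains neither $S_{n-3}=K_{1,n-4}$ (because $\Delta(H)=n-5$) nor $K_{\lfloor n/3\rfloor+1}$, which is precisely why the members of $\mathcal H_n$ must be excluded; moreover $H$ is triangle-free when $n=8$ and $K_4$-free when $n=11$, which is the reason the threshold edge count there is $n-5$ rather than $n-4$. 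For each base value of $n$ one checks that the graphs in $\mathcal H_n$ do not embed (from the degree and clique bounds) while every other member of $\mathcal K(n,n-4)$ --- or of $\mathcal K(n,n-5)$ when $n\in\{8,11\}$ --- admits an explicit injection of its vertices into $\mathbb Z_n$ putting every edge at cyclic distance at least $3$; the only genuinely delicate point is that the anomaly at $8,11$ keeps the cases $n=9,12$ from being bootstrapped off smaller values, so these two must be treated on their own.

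\textbf{Inductive step (leaf case).} Fix $n\ge 15$ and $F\in\mathcal K(n,n-4)\setminus\mathcal H_n$. Since $e(F)=n-4<n$, $F$ has a vertex of degree $0$ or $1$. Suppose first that $F$ has a leaf $v$; then $F':=F-v$ lies in $\mathcal K(n-1,(n-1)-4)$ and $n-1\ge 14\notin\{8,11\}$. If $F'\notin\mathcal H_{n-1}$, the induction hypothesis gives $F'\subseteq\overline{C_{n-1}^2}$, and since $F$ arises from $F'$ by attaching one pendant edge, $F$ is one of the graphs in $(F')^{+}$, so Proposition~\ref{prop1.5}(i) yields $F\subseteq H$. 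If $F'\in\mathcal H_{n-1}$, then $F'$ is $S_{n-4}$ plus isolated vertices (and, when $n=15$, possibly $K_5$ plus isolated vertices), so $F$ is one of only a handful of graphs: attaching the pendant at the centre of the star recreates $S_{n-3}\in\mathcal H_n$, which is excluded by hypothesis, whereas the remaining possibilities --- essentially $S^{*}_{n-4}$, $S_{n-4}\cup K_2$, and for $n=15$ also $K_5^{+}$ and $K_5\cup K_2$, each padded with isolated vertices --- are embedded by an explicit layout, placing the star (or the $K_5$) on a vertex of $H$ together with appropriate $\overline{C_n^2}$-neighbours and using the four $C_n^2$-neighbours of that vertex as a reservoir for the pendant endpoint and the isolated vertices.

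\textbf{Inductive step (no-leaf case).} Now suppose $F$ has no vertex of degree $1$; then every vertex of $F$ has degree $0$ or $\ge 2$, and writing $F_0$ for the union of the nontrivial components we get $\delta(F_0)\ge 2$, hence $e(F_0)\ge|V(F_0)|$ and $|V(F_0)|\le e(F_0)=n-4$; thus $F=F_0\cup\overline{K_s}$ with $s=n-|V(F_0)|\ge 4$. Put $d=\delta(F_0)$ and pick $w\in V(F_0)$ of degree $d$. Counting edges, $2(n-4)=2e(F_0)\ge d\,|V(F_0)|\ge d(d+1)$, so $d\le\tfrac12\big(\sqrt{8n-31}-1\big)$; consequently $d\le t:=\lceil\tfrac{n-1}{4}\rceil-1$ for all $n\ge 15$, except for finitely many pairs $(n,F_0)$ in which $F_0$ is ``small and dense''. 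When $d\le t$: the graph $F-w$ has $n-1$ vertices and $n-4-d\le (n-1)-4$ edges, so it may be completed by $d-1$ new edges to some $\widehat F\in\mathcal K(n-1,(n-1)-4)$; placing those edges so that $\widehat F$ still contains a cycle (and, for $n=15$, keeping $\widehat F\ne K_5$) ensures $\widehat F\notin\mathcal H_{n-1}$, so the induction hypothesis gives $\widehat F\subseteq\overline{C_{n-1}^2}$ and hence $F-w\subseteq\widehat F\subseteq\overline{C_{n-1}^2}$. Then Proposition~\ref{prop1.5}(ii) applied to $F-w$ shows that $H$ contains every graph in $(F-w)^{+t}$, and therefore (since $d\le t$) also $F\in(F-w)^{+d}$. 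The finitely many excluded ``small dense'' $F_0$ --- which can only occur for $n$ in a bounded range, $K_{2,2,2}$ at $n=16$ being the first --- are embedded directly by a short ad hoc argument, using that $H$ contains a clique on $\lfloor n/3\rfloor$ vertices together with vertices of high degree and has abundant room for the $s$ isolated vertices.

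\textbf{Main obstacle.} The induction itself runs smoothly; the substantive work is in the base cases $6\le n\le 14$ --- mechanical but numerous --- where the exceptional families $\mathcal H_n$ must be verified by hand and the anomaly at $n=8,11$ (and its knock-on effect at $n=9,12$) has to be kept under control. The remaining chores --- the explicit layouts for the leaf-case boundary graphs, the handful of small dense $F_0$ in the no-leaf case, and the routine check that $\delta(F_0)\le t$ in the generic range of $n$ --- amount to writing out a few cyclic arrangements.
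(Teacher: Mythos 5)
The paper does not prove this statement: Lemma~\ref{lem1.6} is quoted from reference \cite{3} (Khan--Yuan) without proof, so there is no in-paper argument to compare yours against. That said, your strategy --- induction on $n$ with Proposition~\ref{prop1.5} as the lifting device, a split according to whether $F$ has a leaf or has all nonzero degrees at least $2$, the degree-sum bound $\lfloor 2e(F)/\lceil\frac{n-1}{4}\rceil\rfloor$ to confine the high-minimum-degree cases to a clique $K_{\lfloor n/3\rfloor}$ via Fact~\ref{fact1.7}, and hand verification for $6\leqslant n\leqslant 14$ --- is precisely the template the paper itself uses to prove the parallel Lemmas~\ref{lem1.11}--\ref{lem1.14} one and two edges higher, and it appears sound modulo the (large but routine) finite case-checking you defer. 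The one organizational difference is that for the intermediate degrees $2\leqslant\delta(F)\leqslant t$ the paper's analogues apply Proposition~\ref{prop1.5}(ii) directly to the exceptional families inherited from the previous edge count, whereas you delete a minimum-degree vertex and pad $F-w$ back up to $(n-1)-4$ edges before invoking the induction hypothesis; both routes work, but yours requires the extra (easily supplied) check that the padded graph $\widehat F$ avoids $\mathcal{H}_{n-1}$, which you correctly flag.
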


The following fact is obvious.

\begin{fact}\label{fact1.7}
Let $n \geqslant 6$. If $\frac{n}{3} \in \mathbb{Z}$, then $\overline{C_{n}^{2}}$ contains a copy of $K_{\frac{n}{3}}$ but not $K_{\frac{n}{3}+1}^{-}$. If $\frac{n}{3} \notin \mathbb{Z}$, then $\overline{C_{n}^{2}}$ contains a copy of $K_{\lceil\frac{n}{3}\rceil}^{-}$.
\end{fact}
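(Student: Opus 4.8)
Identify the vertices of $C_n$ with the elements of $\mathbb{Z}_n$ in cyclic order, so that in $C_n^2$ the neighbours of $i$ are $i\pm1,i\pm2\pmod n$. Then in $\overline{C_n^2}$ two vertices $i,j$ are adjacent exactly when their cyclic distance around $C_n$ is at least $3$. Hence a clique of $\overline{C_n^2}$ is the same as a subset $S\subseteq\mathbb{Z}_n$ that is pairwise $\geq3$-separated on the cycle, and — the version we actually need — for a $k$-subset $T$ we have $K_k^-\subseteq\overline{C_n^2}[T]$ if and only if $T$ contains at most one pair at cyclic distance $\leq2$. I would run everything through the \emph{gap sequence}: list $T=\{v_0,\dots,v_{k-1}\}$ in cyclic order and let $g_i$ be the length of the arc from $v_i$ to $v_{i+1}$ (indices mod $k$); then $g_i\geq1$ and $\sum_i g_i=n$, and for $n\geq6$ a short gap $g_i\leq2$ forces the pair $\{v_i,v_{i+1}\}$ to be a non-edge of $\overline{C_n^2}$, with distinct indices giving distinct such non-edges (here one uses $k\geq3$, which holds throughout since $n\geq6$).

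For the two containment statements I would simply exhibit the obvious near-arithmetic progressions. If $3\mid n$, say $n=3m$ with $m\geq2$, the set $S=\{0,3,6,\dots,3(m-1)\}$ has every pair at cyclic distance $\min\{3t,3(m-t)\}\geq3$ for $1\leq t\leq m-1$, so $S$ spans $K_m=K_{n/3}$. If $3\nmid n$, write $n=3m+r$ with $r\in\{1,2\}$ and take $T=\{0,3,\dots,3(m-1)\}\cup\{3m\}$, a set of size $m+1=\lceil n/3\rceil$ with gap sequence $(3,3,\dots,3,r)$; a direct check of all pairwise distances shows the only non-edge inside $T$ is $\{3m,0\}$, so $\overline{C_n^2}[T]$ is exactly $K_{m+1}^-=K_{\lceil n/3\rceil}^-$.

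For the non-containment, suppose $3\mid n$, $n=3m$, and that $\overline{C_n^2}$ contains $K_{n/3+1}^-$; then some $(m+1)$-subset $T$ spans at most one non-edge of $\overline{C_n^2}$, i.e.\ contains at most one pair at cyclic distance $\leq2$. Let $g_0,\dots,g_m$ be the gap sequence of $T$ and set $s=\#\{i:g_i\leq2\}$. Since each gap is at least $1$ and the $m+1-s$ gaps not counted by $s$ are at least $3$,
$$ 3m=\sum_{i=0}^{m}g_i\ \geq\ s\cdot1+(m+1-s)\cdot3\ =\ 3m+3-2s, $$
whence $s\geq2$. By the observation of the first paragraph this yields at least two distinct non-edges of $\overline{C_n^2}$ inside $T$, contradicting the bound of one. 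As $|T|=m+1$, this rules out any copy of $K_{n/3+1}^-$ in $\overline{C_n^2}$, completing the proof.

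I do not expect a genuine obstacle. The entire content is the translation "$K_k^-\subseteq\overline{C_n^2}[T]\Longleftrightarrow$ at most one close pair in $T$" together with the one-line gap inequality in the display above, specialised to $|T|=m+1$. The only places that need a moment's care are the boundary behaviour near $n=6$ (so that "$g_i\leq2$" genuinely means cyclic distance $\leq2$, which is exactly where the hypothesis $n\geq6$, equivalently $m\geq2$ in the divisible case, is used), and checking that two distinct short gaps really produce two distinct non-edges rather than the same pair counted twice.
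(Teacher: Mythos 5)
Your proof is correct. The paper offers no argument for this fact at all -- it is simply declared ``obvious'' -- so there is nothing to compare against; your gap-sequence formulation (gaps $g_i\geq 1$ summing to $n$, each gap $\leq 2$ contributing a distinct non-edge once $k\geq 3$, and the inequality $3m\geq 3m+3-2s$ forcing two short gaps in any $(m+1)$-subset when $3\mid n$) is a clean and complete verification, and your explicit sets $\{0,3,\dots\}$ handle both containment claims, including the boundary cases $n=6,7$.
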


\begin{lem}[\cite{1}]\label{lem1.8}
Let $G$ be a simple connected graph of order $n$ with $m$ edges. The spectral radius $\mu(G)$ satisfies $\mu(G) \leqslant \sqrt{2m-n+1}$ with equality if and only if $G$ is isomorphic to $S_{n}$ or $K_{n}$.
\end{lem}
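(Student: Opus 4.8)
The plan is to run a Rayleigh-quotient argument on the Perron eigenvector, using $A^2$. Let $x=(x_1,\dots,x_n)$ be a Perron eigenvector of the adjacency matrix $A$ of $G$; since $G$ is connected all its entries are positive, and we scale $x$ so that $\max_i x_i=x_u=1$ for some vertex $u$. From $\mu^2 x_u=(A^2x)_u=\mu\sum_{v\sim u}x_v=\sum_{v\sim u}\sum_{w\sim v}x_w$ I pull out the $d_G(u)$ terms with $w=u$ (each equal to $x_u=1$) to get
\[
\mu^2 = d_G(u) + \sum_{v\sim u}\ \sum_{\substack{w\sim v\\ w\ne u}} x_w ;
\]
the remaining double sum has $\sum_{v\sim u}(d_G(v)-1)$ terms, each at most $1$, so $\mu^2\le\sum_{v\sim u}d_G(v)$. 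Now a degree count finishes the inequality: writing $N[u]$ for the closed neighbourhood of $u$, we have $\sum_{v\sim u}d_G(v)=2m-d_G(u)-\sum_{w\notin N[u]}d_G(w)$, and since $G$ is connected each of the $n-1-d_G(u)$ vertices outside $N[u]$ has degree at least $1$, so $\sum_{v\sim u}d_G(v)\le 2m-n+1$ and $\mu(G)\le\sqrt{2m-n+1}$.

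For the equality case, the two bounds above must both be tight, which forces (i) $x_w=1$ for every $w\ne u$ that is a neighbour of a neighbour of $u$, and (ii) every vertex outside $N[u]$ has degree exactly $1$. If every neighbour of $u$ is a leaf, connectivity gives $G=S_n$ with $u$ its centre. Otherwise (so $n\ge 3$) I first show $V(G)=N[u]$: a vertex at distance $\ge 3$ from $u$ and its unique neighbour (both of degree $1$ by (ii)) are impossible because that neighbour also meets $N(u)$; and a vertex $w$ at distance exactly $2$ has degree $1$ by (ii) and $x_w=1$ by (i), so its unique neighbour $v$ would satisfy $x_v=\mu x_w=\mu>1$, contradicting the maximality of $x_u=1$. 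Then, splitting $N(u)$ into its leaves $A$ and the rest $B$, one notes that a vertex of $B$ has no neighbour in $A$, so the eigenvalue equation at $v\in B$ gives $d_G(v)=\mu$ and hence $v$ has $\mu-1$ neighbours in $B$, forcing $|B|\ge\mu$ (when $B\ne\emptyset$); but the eigenvalue equation at $u$ reads $\mu^2=|B|\mu+|A|$, which forces $\mu>|B|$ once $|A|\ge 1$. Hence $A=\emptyset$, so $d_G(u)=n-1$, every neighbour of $u$ has degree $\mu$, and $\mu^2=(n-1)\mu$ gives $\mu=n-1$, i.e.\ $G=K_n$. That $S_n$ and $K_n$ conversely attain equality is a one-line check.

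The inequality is just the two short steps above; the only genuine work is the equality analysis, and there the main obstacle is the ``mixed'' configuration in which $u$ has both leaf and non-leaf neighbours — ruling it out (via the regularity of $G[B]$ against the scalar identity $\mu^2=|B|\mu+|A|$) is exactly what narrows the extremal family to $S_n$ and $K_n$.
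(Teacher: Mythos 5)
The paper does not prove this lemma; it is quoted verbatim from Hong's 1988 paper \cite{1}, so there is no internal proof to compare against. Your argument is correct and is essentially Hong's classical one: bounding $\mu^2$ at a vertex maximizing the Perron vector by $\sum_{v\sim u}d_G(v)$, then using $\sum_{v\sim u}d_G(v)=2m-d_G(u)-\sum_{w\notin N[u]}d_G(w)\leqslant 2m-n+1$ via $\delta\geqslant 1$; your equality analysis (forcing $V(G)=N[u]$ and then ruling out the mixed leaf/non-leaf neighbourhood via $\mu^2=|B|\mu+|A|$ against $|B|\geqslant\mu$) is sound, with only the trivial observation $\mu>1$ (the graph contains $P_3$ whenever a vertex at distance $2$ exists) left implicit.
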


\begin{thm}[\cite{4}]\label{thm1.9}
Let $G$ be a graph of order $n$ and spectral radius $\mu(G)$. If $\mu(G)>n-2$, then $G$ contains a Hamilton cycle unless $G=K_{n-1}+e$. Let $\mu(\overline{G})$ be the spectral radius of the complement graph $\overline{G}$. If $\mu(\overline{G}) \leqslant \sqrt{n-2}$, then $G$ contains a Hamilton cycle unless $G=K_{n-1}+e$.
\end{thm}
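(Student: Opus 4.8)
Throughout, recall that the exceptional graph $K_{n-1}+e$ (the clique $K_{n-1}$ together with one pendant vertex) is not Hamiltonian, since it has a vertex of degree $1$, yet it is connected and properly contains $K_{n-1}$, so $\mu(K_{n-1}+e)>\mu(K_{n-1})=n-2$; moreover its complement is $K_{1,n-2}\cup K_1$, so that $\mu(\overline{K_{n-1}+e})=\sqrt{n-2}$. Thus both hypotheses are attained exactly by $K_{n-1}+e$, and the two statements are sharp. For the assertion on $\mu(G)$ I would argue as follows. Assume $\mu(G)>n-2$ and that $G$ has no Hamilton cycle; the goal is $G=K_{n-1}+e$. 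First $G$ is connected, for otherwise its spectral radius is attained on a component with at most $n-1$ vertices and $\mu(G)\le\mu(K_{n-1})=n-2$, a contradiction. Writing $m=e(G)$ and applying Lemma~\ref{lem1.8} to $G$, the inequality $n-2<\mu(G)\le\sqrt{2m-n+1}$ gives $2m>(n-2)^2+n-1=n^2-3n+3$; since $2m$ is an integer, $2m\ge n^2-3n+4$, that is $m\ge\binom{n-1}{2}+1$. On the other hand, Ore's theorem \cite{14} says a non-Hamiltonian graph of order $n$ has at most $\binom{n-1}{2}+1$ edges, with $K_{n-1}+e$ the unique extremal graph. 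Combining the two bounds forces $m=\binom{n-1}{2}+1$ and $G=K_{n-1}+e$.

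For the assertion on $\mu(\overline G)$ the plan is to pass to the closure. Assume $\mu(\overline G)\le\sqrt{n-2}$ and that $G$ has no Hamilton cycle. By the Bondy--Chvátal closure theorem \cite{5, 17} the $n$-closure $\mathcal{C}_n(G)$ is again non-Hamiltonian, so $\mathcal{C}_n(G)\ne K_n$. Set $F=\overline{\mathcal{C}_n(G)}$. Since $\mathcal{C}_n(G)\supseteq G$ we have $F\subseteq\overline G$, whence $\mu(F)\le\mu(\overline G)\le\sqrt{n-2}$, and $F$ has at least one edge. The defining property of the closure, namely that every pair of non-adjacent vertices of $\mathcal{C}_n(G)$ has degree sum at most $n-1$, translates into the statement that $d_F(u)+d_F(v)\ge n-1$ for every edge $uv$ of $F$. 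The heart of the matter is then the spectral lemma: \emph{if $F$ is a graph on $n$ vertices with at least one edge such that $d_F(u)+d_F(v)\ge n-1$ for every edge $uv$, then $\mu(F)\ge\sqrt{n-2}$, with equality if and only if $F\cong K_{1,n-2}\cup K_1$.} Granting it, the chain $\sqrt{n-2}\ge\mu(\overline G)\ge\mu(F)\ge\sqrt{n-2}$ collapses to equalities, so $F\cong K_{1,n-2}\cup K_1$ and $\mu(\overline G)=\sqrt{n-2}$; a short Perron--Frobenius argument (any edge of $\overline G$ not already in $F$ would be incident to the star component $K_{1,n-2}$ and would strictly raise the spectral radius above $\sqrt{n-2}$) then yields $\overline G=F$, i.e.\ $G=K_{n-1}+e$.

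I would prove the spectral lemma by splitting on $\Delta:=\Delta(F)$. If $\Delta\ge n-2$, then the star $K_{1,\Delta}\subseteq F$ already gives $\mu(F)\ge\sqrt{\Delta}\ge\sqrt{n-2}$. If $\Delta\le n-3$, then along any edge both endpoints have degree at least $n-1-\Delta\ge 2$, so every non-isolated vertex has degree at least $2$ and hence $e(F)\ge s$, where $s$ is the number of non-isolated vertices; feeding this, together with $\sum_v d_F(v)^2=\sum_{uv\in E(F)}(d_F(u)+d_F(v))\ge(n-1)e(F)$, into the standard bound $\mu(F)^2\ge\big(\sum_v d_F(v)^2\big)/s$ gives $\mu(F)^2\ge n-1>n-2$. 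The equality analysis then forces $\Delta=n-2$, and a connected graph of maximum degree $n-2$ has spectral radius exactly $\sqrt{n-2}$ only when it is the star $K_{1,n-2}$, leaving a single isolated vertex, so $F\cong K_{1,n-2}\cup K_1$.

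The routine part is the assertion on $\mu(G)$, which reduces cleanly to Lemma~\ref{lem1.8} and Ore's theorem. The genuinely delicate point is the spectral lemma driving the $\mu(\overline G)$ assertion: the bound must be pushed to the exact boundary value $\sqrt{n-2}$, and it is precisely the range $\Delta(F)\le n-3$ where the hypothesis that every edge has large degree sum has to be spent sharply. The accompanying equality discussion---and in particular the Perron--Frobenius step needed to recover $G$ itself rather than merely its closure---is the part that will require the most care.
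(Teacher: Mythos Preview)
The paper does not prove Theorem~\ref{thm1.9}; it is quoted from Fiedler and Nikiforov~\cite{4} and used as a black box. So there is no in-paper argument to compare your proposal against.

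That said, your proposal is essentially the original Fiedler--Nikiforov proof, and it is sound. A couple of small points deserve tightening. In the first part you invoke Ore's edge bound and assert that $K_{n-1}+e$ is the \emph{unique} non-Hamiltonian graph with $\binom{n-1}{2}+1$ edges; for $n=5$ the graph $K_2\vee\overline{K_3}$ is a second extremal example, so you should either assume $n\ge 6$ or observe separately that $\mu(K_2\vee\overline{K_3})=3=n-2$ fails the strict hypothesis $\mu(G)>n-2$. In the second part, when you write ``the standard bound $\mu(F)^2\ge(\sum_v d_F(v)^2)/s$'' with $s$ the number of non-isolated vertices, you are tacitly applying Lemma~\ref{lem1.10} to the subgraph of $F$ induced on its non-isolated vertices; this is legitimate because isolated vertices contribute nothing to the spectral radius, but it is worth saying explicitly. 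The equality discussion and the final Perron--Frobenius step recovering $\overline G=F$ (not merely $\overline{\mathcal C_n(G)}=F$) are correct as you outline them: any extra edge of $\overline G$ necessarily meets the star component $K_{1,n-2}$, making that component a connected proper supergraph of the star and forcing $\mu(\overline G)>\sqrt{n-2}$.
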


\begin{lem}[\cite{2}]\label{lem1.10}
Let $G$ be a graph of order $n$. Then the spectral radius $\mu(G)$ of $G$ satisfies $\mu(G) \geqslant \sqrt{\frac{\sum\limits_{u \in V(G)} d_G^{2}(u)}{n}}$ with equality if and only if each component of $G$ is an $r$-regular graph or an $\left(r_{1}, r_{2}\right)$-biregular graph, where $r^{2}=\frac{\sum\limits_{u \in V(G)} d_G^{2}(u)}{n}$ and $r_{1}, r_{2}$ satisfy $r_{1}r_{2}=r^{2}$.
\end{lem}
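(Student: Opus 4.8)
The plan is to deduce the inequality from the Rayleigh (Courant--Fischer) characterization of the largest eigenvalue applied to $A^{2}$ rather than to $A:=A(G)$ itself. Since $A$ is a nonnegative real symmetric matrix, its largest eigenvalue $\mu(G)$ coincides with its spectral radius, so $|\lambda|\leqslant\mu(G)$ for every eigenvalue $\lambda$ of $A$; hence the largest eigenvalue of $A^{2}$ equals $\mu(G)^{2}$. Testing the Rayleigh quotient of $A^{2}$ against the all-ones vector $\mathbf{1}$ gives $\mu(G)^{2}\geqslant \frac{\mathbf{1}^{\top}A^{2}\mathbf{1}}{\mathbf{1}^{\top}\mathbf{1}}$. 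Because $A\mathbf{1}$ is precisely the degree vector, $\mathbf{1}^{\top}A^{2}\mathbf{1}=(A\mathbf{1})^{\top}(A\mathbf{1})=\sum_{u\in V(G)}d_{G}(u)^{2}$ while $\mathbf{1}^{\top}\mathbf{1}=n$, and taking square roots yields the stated bound.

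For the equality case, equality in a Rayleigh quotient forces $\mathbf{1}$ to be an eigenvector of $A^{2}$ for its top eigenvalue, i.e.\ $A^{2}\mathbf{1}=\mu(G)^{2}\mathbf{1}$; writing $r:=\mu(G)$ and pairing this identity with $\mathbf{1}$ recovers $r^{2}=\frac{1}{n}\sum_{u}d_{G}(u)^{2}$, which is exactly the relation in the statement, so it remains only to describe the structure. The identity $A^{2}\mathbf{1}=r^{2}\mathbf{1}$ says that $\sum_{w\sim u}d_{G}(w)=r^{2}$ for every vertex $u$, a condition that decouples over the connected components, so I would fix a component $H$ and argue on $H$ alone, where $A(H)^{2}\mathbf{1}_{V(H)}=r^{2}\mathbf{1}_{V(H)}$ with $\mathbf{1}_{V(H)}$ a strictly positive vector.

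The component analysis then splits according to bipartiteness, and this Perron--Frobenius step is where the real work lies. If $H$ is connected and non-bipartite, then there is a walk of even positive length between any two of its vertices (concatenate a $u$--$v$ walk with an odd closed walk through an odd cycle when the parity is wrong), so $A(H)^{2}$ is an irreducible nonnegative matrix; by Perron--Frobenius its only eigenvalue admitting a positive eigenvector is the Perron eigenvalue, whose eigenvector is unique up to scaling, hence $\mathbf{1}_{V(H)}$ must be that (constant) eigenvector, forcing $A(H)\mathbf{1}_{V(H)}=r\mathbf{1}_{V(H)}$, i.e.\ $H$ is $r$-regular. If $H$ is connected and bipartite with parts $X,Y$ and biadjacency matrix $B$, then $A(H)^{2}$ is block-diagonal with blocks $BB^{\top}$ and $B^{\top}B$, each irreducible because $H$ is connected; applying Perron--Frobenius to each block shows the Perron eigenvector $\phi$ of $A(H)$ is constant on $X$ and constant on $Y$, and reading off $A(H)\phi=r\phi$ on the two sides gives that every vertex of $X$ has some degree $r_{1}$ and every vertex of $Y$ has some degree $r_{2}$ with $r_{1}r_{2}=r^{2}$, i.e.\ $H$ is $(r_{1},r_{2})$-biregular. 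The converse is a short computation: an $r$-regular or $(r_{1},r_{2})$-biregular (with $r_{1}r_{2}=r^{2}$) component of order $n_{i}$ has spectral radius $r$ and contributes $n_{i}r^{2}$ to $\sum_{u}d_{G}(u)^{2}$ (using $|X|r_{1}=|Y|r_{2}$ in the biregular case), so $\mu(G)=r=\sqrt{\sum_{u}d_{G}(u)^{2}/n}$ and equality holds.

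In summary, the inequality itself is immediate; the only delicate points are verifying the irreducibility of $A(H)^{2}$ (or of its two blocks) from connectivity together with the bipartite/non-bipartite dichotomy, and invoking the uniqueness of the positive Perron eigenvector of an irreducible nonnegative matrix. The disconnected case follows at once from the single-component statement, and the degenerate situation of isolated vertices (which forces $r=0$, hence $G$ edgeless) is dealt with separately.
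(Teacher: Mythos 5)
Your proof is correct. Note that the paper itself gives no argument for this lemma --- it is quoted verbatim from Hofmeister \cite{2} --- so there is nothing internal to compare against; your route (Rayleigh quotient of $A^{2}$ at the all-ones vector, so that $\mathbf{1}^{\top}A^{2}\mathbf{1}=\sum_{u}d_{G}^{2}(u)$, followed by the Perron--Frobenius analysis of $A(H)^{2}$, split into the irreducible non-bipartite case and the two-block bipartite case) is exactly the standard proof of Hofmeister's bound, and your handling of the delicate points --- that equality forces $\mathbf{1}$ into the top eigenspace of $A^{2}$, that this eigenspace is $\ker(A-\mu I)$ alone in the non-bipartite case, and the degenerate case of isolated vertices --- is sound.
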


\begin{lem}\label{lem1.11}
Let $6 \leqslant n \leqslant 15$. Then $\overline{C_{n}^{2}}$ contains each copy of $F \in \mathcal{K}(n, n-3)$ unless $F$ contains one of $\mathcal{F}_{n}$ as a subgraph, where $\mathcal{F}_{n}$ is a family of graphs with $6 \leqslant n \leqslant 15$ as follows:
$$\mathcal{F}_{n}=\left\{\begin{array}{ll}S_{3}, & n=6; \\ S_{4}, C_{4}, K_{3}, & n=7; \\ K_{3}, S_{5}, & n=8; \\ K_{4}^{-}, S_{6}, F_{5}, & n=9; \\ K_{4}, S_{7}, S_{5,2}, & n=10; \\ K_{4}, S_{8}, & n=11; \\ K_{5}^{-}, S_{9}, & n=12; \\ S_{10}, K_{5}, & n=13; \\ S_{11}, K_{5}, & n=14; \\ S_{12}, & n=15. \end{array}\right.$$
\end{lem}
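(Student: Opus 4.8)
\noindent\emph{Proof idea.}
The plan is to argue separately for each value of $n$ with $6\leqslant n\leqslant15$, working throughout in the explicit model of $\overline{C_n^2}$ as the circulant graph on $\mathbb{Z}_n$ in which $i$ and $j$ are adjacent exactly when their cyclic distance is at least $3$. Two structural facts about this model carry most of the weight: it is $(n-5)$-regular, and a clique in it is precisely a subset of $\mathbb{Z}_n$ whose cyclic gaps are all at least $3$, so that $\omega(\overline{C_n^2})=\lfloor n/3\rfloor$ and, when $3\mid n$, the maximum cliques are the three cosets of $3\mathbb{Z}_n$ and hence pairwise vertex-disjoint (this sharpens Fact~\ref{fact1.7}). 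It is also convenient to record the shape of the forbidden family: a direct comparison shows $\mathcal{F}_n=\mathcal{H}_n\cup\{X_n\}$, where $X_n$ is a single extra graph, namely $C_4$ for $n=7$, $S_5$ for $n=8$, $F_5$ for $n=9$, $S_{5,2}$ for $n=10$, $S_8$ for $n=11$, $K_5^-$ for $n=12$, $K_5$ for $n=13$, and no genuinely new graph for $n\in\{6,14,15\}$.

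\emph{Necessity.} If $F$ contains a member of $\mathcal H_n$, then $K_n\backslash E(F)$ is $C_n^2$-free by Theorem~\ref{thm1.4}, which is exactly to say that $\overline{C_n^2}$ contains no copy of $F$ (see also Lemma~\ref{lem1.6}). For the extra obstruction $X_n$ one argues directly in the cyclic model: the star $S_{n-3}$ has a vertex of degree $n-4>n-5$; $C_4$ does not embed in $\overline{C_7^2}\cong C_7$, which has girth $7$; the bowtie $F_5$ does not embed in $\overline{C_9^2}$, since there the only triangles are the three vertex-disjoint cosets $\{0,3,6\}$, $\{1,4,7\}$, $\{2,5,8\}$; the book $S_{5,2}=K_2\vee\overline{K_3}$ does not embed in $\overline{C_{10}^2}$, since no two adjacent vertices there have three common neighbours; and $K_5^-$ does not embed in $\overline{C_{12}^2}$, since any two $K_4$'s there are vertex-disjoint and so cannot share a triangle.

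\emph{Sufficiency.} We induct on $n$; the base case $n=6$ is immediate, since $\overline{C_6^2}=M_3$ and the only graph on $6$ vertices with $3$ edges containing no $S_3=P_3$ is the perfect matching. Now let $F\in\mathcal K(n,n-3)$ contain no member of $\mathcal F_n$. If $F$ has a vertex $v$ of degree $1$, set $F'=F-v\in\mathcal K(n-1,(n-1)-3)$; provided $F'$ contains no member of $\mathcal F_{n-1}$, the induction hypothesis gives $\overline{C_{n-1}^2}\supseteq F'$, whence $\overline{C_n^2}\supseteq F$ by Proposition~\ref{prop1.5}(i), since $F\in{F'}^{+}$. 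Two families of configurations remain, finite for each $n$: (a) $F$ has no vertex of degree $1$, so every non-isolated vertex has degree $\geqslant2$, whence the non-isolated part of $F$ has at most $n-3$ vertices and, as $F$ avoids the short cycles and small cliques in $\mathcal F_n$, is one of an explicit list of disjoint unions of cycles and paths, each embedded into $\overline{C_n^2}$ by placing cycles on arithmetic progressions of common difference $3$ and filling the gaps with the paths; (b) $F$ has a vertex $v$ of degree $1$ but $F'=F-v$ contains a member $F_0$ of $\mathcal F_{n-1}$, so $F\supseteq F_0$, and since $F$ avoids $\mathcal F_n$ the graph $F_0$ is one of the few members of $\mathcal F_{n-1}$ that neither belong to nor contain a member of $\mathcal F_n$; for each such $F_0$ the graphs $F$ that can occur form a short explicit list, handled by direct embeddings.

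The main obstacle is the sufficiency direction, and within it the bookkeeping in case (b): the families $\mathcal F_{n-1}$ and $\mathcal F_n$ are not nested, so every obstruction forbidden at level $n-1$ but harmless at level $n$ spawns a small family of graphs on which the induction must be supplemented by an ad hoc embedding. Organising these residual embeddings, together with the no-pendant case (a) and the verification, for each $n$, that $X_n$ is the only exception beyond $\mathcal H_n$, is where the bulk of the (routine but lengthy) case analysis lies; the concrete cyclic description of $\overline{C_n^2}$ reduces every individual embedding or non-embedding question to a mechanical check.
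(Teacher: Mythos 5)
Your overall architecture is the same as the paper's: establish non-embeddability of the members of $\mathcal F_n$ directly in the circulant model, and prove embeddability by induction on $n$, stripping a pendant vertex and invoking Proposition \ref{prop1.5}(i), with a separate direct analysis of the graphs that cannot be reduced this way. The necessity half is fine (your clique/coset and common-neighbourhood computations for $C_4$, $F_5$, $S_{5,2}$, $K_5^-$ and the stars are correct, and the $\mathcal H_n$ members are already excluded by Theorem \ref{thm1.4}/Lemma \ref{lem1.6}).

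The genuine gap is in your case (a). You only ever remove vertices of degree $1$, so the irreducible case is ``every non-isolated vertex has degree at least $2$,'' and you then assert that such an $F$ is a disjoint union of cycles and paths. That is false on two counts: paths have endpoints of degree $1$ and cannot occur in this case at all; and a graph with $k$ non-isolated vertices of minimum degree $2$ and $n-3$ edges is a disjoint union of cycles only in the extreme subcase $k=n-3$. Whenever $k<n-3$ there are vertices of degree at least $3$, and for $10\leqslant n\leqslant 15$ this admits many graphs that avoid $\mathcal F_n$ and are not unions of cycles: for instance at $n=12$ (nine edges, avoid $K_5^-$ and $S_9$) one has $K_4\cup K_3$, $K_{3,3}$, the prism, theta graphs, and the book $S_{6,2}=K_2\vee\overline{K_4}$, each of which must be embedded in $\overline{C_{12}^2}$ and none of which your case (a) covers. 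The paper avoids this explosion by also using Proposition \ref{prop1.5}(ii): a vertex of any degree $d$ with $2\leqslant d\leqslant t=\lceil\frac{n-1}{4}\rceil-1$ can be stripped as well, so the residual case is $\delta(F)\geqslant t+1\geqslant 3$ (for $n\geqslant 10$), which forces at most $\lfloor 2(n-3)/\lceil\frac{n-1}{4}\rceil\rfloor\approx 6$ non-isolated vertices and reduces case (a) to a genuinely short list ($K_5$, $W_6$, a couple of sporadic graphs, etc.). You should either incorporate part (ii) of Proposition \ref{prop1.5} into your reduction (at the cost of enlarging your case (b) to cover removals whose residue contains a forbidden configuration at the lower level, exactly as the paper's cases (b.$k$)/(c.$k$) do), or replace the ``unions of cycles and paths'' claim with a correct and complete enumeration of the minimum-degree-$2$ graphs with $n-3$ edges avoiding $\mathcal F_n$; as written, the sufficiency argument does not go through.
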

\begin{proof}

Let $n=6$. Then $\overline{C_{6}^{2}}=M_{3}$ and $\mathcal{K}(6,2)=\left\{M_{2}, S_{3}\right\}$. Hence it is easy to see that $\mathcal{K}(6, 3)=\left\{M_{3}, S_{3} \cup K_{2}, P_{4}, S_{4}, K_{3}\right\}$. Then obviously the lemma holds for $n=6$.

Let $n=7$. Then $\overline{C_{7}^{2}}=C_{7}$ and $\mathcal{K}(7, 3)=\left\{M_{3}, S_{3} \cup K_{2}, P_{4}, K_{3}, S_{4}\right\}$. Hence it is easy to see that $\mathcal{K}(7,4)=\left\{M_{2} \cup S_{3}, P_{4} \cup K_{2}, 2S_{3}, P_{5}, K_{3} \cup K_{2}, C_{4}, T_{1, 1, 2}, S_{4} \cup K_{2}, K_{3}^{+}, S_{5}\right\}$. Then obviously the lemma holds for $n=7$.

Let $n=8$. Then clearly $\overline{C_{8}^{2}}$ does not contain a copy of $K_{3}$. It is easy to see that $\mathcal{K}(8,4)=\left\{M_{4}, M_2 \cup S_{3}, P_{4} \cup K_{2}, 2 S_{3}, S_{4} \cup K_{2}, P_{5}, K_{3}^{+}, C_{4}, S_{5}, K_{3} \cup K_{2}, T_{1, 1, 2}\right\}$ and $\mathcal{K}(8,5)=\{M_{2} \cup P_{4}, M_{2} \cup S_{4},$ $S_{3} \cup S_{4}, P_{5} \cup K_{2}, T_{1 ,1 ,2}  \cup K_{2}, C_{4}^{+}, P_{4}  \cup S_{3}, P_{6}, T_{1, 2, 2}, 2 S_{3} \cup K_{2}, T_{1, 1, 3}, D_{6}, C_{4} \cup K_{2}, C_5\} \cup \mathcal{F}_{8}^{\prime}$. Let $\mathcal{F}_{8}^{\prime}=\left\{T_{1, 1, 1, 2}, K_{3}^{+} \cup K_{2}, K_{3} \cup S_{3}, K_{3} \cup M_{2}, S_{5} \cup K_{2}, S_{6}, S_{5}^{*}, K_{4}^{-}, G_{2}, G_{3}, G_{4}\right\}$, where $G_{2}, G_{3}$ and $G_{4}$ are obtained from $K_{3}^{+}$ by adding a new vertex and joining it to a vertex of $K_{3}^{+}$ with degree one, two and three, respectively. It is straightforward to check that $\overline{C_{8}^{2}}$ contains each graph in $\mathcal{K}(8,5) \backslash  \mathcal{F}_{8}^{\prime}$ and the graphs in $\mathcal{F}_{8}^{\prime}$ contains a copy of either $K_{3}$ or $S_{5}$. So the lemma holds for $n=8$.

Let $n=9$. Let $t=\left\lceil\frac{n-1}{4}\right\rceil-1=\left\lceil\frac{9-1}{4}\right\rceil-1=1$. Regardless of isolated vertices of  $F$, we consider the following two cases:\\
(a.1) $\delta(F) \geqslant t+1=2$. Then the number of non-isolated vertices of $F$ is at most $\left\lfloor\frac{2(n-3)}{\left\lceil\frac{n-1}{4}\right\rceil}\right\rfloor=6$.\\
(b.1) $\delta(F)=1$. Then by Proposition \ref{prop1.5}(i), we only need to consider $F \in \mathcal{F}_{8}^{\prime+}$.\\
In case of (a.1) the graphs with $\delta(F) \geqslant 2$ and $e(F)=6$ are $C_{6}, 2 K_{3}, K_{4}, F_{5}, K_{2,3}$ and $C_{5}+e$. A simple observation shows that $\overline{C_{9}^{2}}$ contains $C_{6}, 2 K_{3}, K_{2, 3}$ and $C_{5}+e$ as subgraphs. Now the graphs in (b.1) are the graphs obtained from a graph belonging to $\mathcal{F}_{8}^{\prime}$ by adding an isolated vertex $v$ and an arbitrary edge incident with $v$. Clearly, we can easily check that $\overline{C_{9}^{2}}$ contains a copy of $K_{3}$ but not $K_{4}^{-}$ by Fact \ref{fact1.7}. Thus it is not hard to show that $\overline{C_{9}^{2}}$ contains copies of $\mathcal{F}_{8}^{\prime+}\backslash\left\{S_{6}^{*}, S_{6} \cup K_{2}, S_{7}, S_{6}+e, K_{4}^{-} \cup K_{2}, G_5, G_6\right\}$, where $G_{5}$ and $G_{6}$ are obtained from $K_{4}^{-}$ by adding a new vertex and joining it to a vertex of $K_{4}^{-}$ with degree two and three, respectively. Thus we are done for $n=9$. Moreover, $\overline{C_{9}^{2}}$ contains each graph in $\mathcal{K}(9,5) \backslash\left\{K_{4}^-, S_{6}\right\}$ as a subgraph by Lemma \ref{lem1.6}.

Let $n=10$. Let $t=\left\lceil\frac{n-1}{4}\right\rceil-1=\left\lceil\frac{10-1}{4}\right\rceil-1=2$. Regardless of isolated vertices of $F$, we consider the following three cases:\\
(a.2) $\delta(F) \geqslant t+1=3$. Then the number of non-isolated vertices of $F$ is at most $\left\lfloor\frac{2(n-3)}{\left\lceil\frac{n-1}{4}\right\rceil}\right\rfloor=4$.\\
(b.2) $\delta(F)=2$. Then by Proposition \ref{prop1.5}(ii), we only need to consider $F \in\left\{K_{4}^-, S_{6}\right\}^{+2}$.\\
(c.2) $\delta(F)=1$. Then by Proposition \ref{prop1.5}(i), we only need to consider the graphs obtained from a graph in $\mathcal{K}(9,6)$ which contains one of graph in $\mathcal{F}_9$ as a subgraph by adding an isolated vertex $v$ and an arbitrary edge incident with $v$.\\
Clearly, there is no graph in $(a.2)$. As for the graphs in $(b.2)$, we only need to consider $F \in\left\{K_{4}^{-}\right\}^{+2}$ because of $\delta(F)=2$ without considering isolated vertices. A simple observation shows that $\overline{C_{10}^{2}}$ contains each graph in $\left\{K_{4}^{-}\right\}^{+2} \backslash\left\{S_{5,2}\right\}$ as a subgraph. Clearly, we know that $\overline{C_{10}^{2}}$ contains a copy of $K_{4}^{-}$ but not $K_{4}$ by Fact \ref{fact1.7}. Thus it is not hard to show that $\overline{C_{10}^{2}}$ contains each graph in $(c.2)$ except $K_{4} \cup K_{2}, K_{4}^{+}, S_{7}^{*}, S_{7}\cup K_{2}, S_{8}, S_{7}+e$. Thus we are done for $n=10$. Moreover, $\overline{C_{10}^{2}}$ contains each graph in $\mathcal{K}(10,6) \backslash\left\{K_{4}, S_{7}\right\}$ as a subgraph by Lemma \ref{lem1.6}.

Let $n=11$. Let $t=\left\lceil\frac{n-1}{4}\right\rceil-1=\left\lceil\frac{11-1}{4}\right\rceil-1=2$. Note that $\overline{C_{11}^{2}}$ contains each graph in $\mathcal{K}(11, 7) \backslash\left\{K_4^{+}, K_4 \cup K_2, S_8 \right\}$ by Lemma \ref{lem1.6}. Regardless of isolated vertices of $F$, we consider the following three cases:\\
(a.3) $\delta(F) \geqslant t+1=3$. Then the number of non-isolated vertices of $F$ is at most $\left\lfloor\frac{2(n-3)}{\left\lceil\frac{n-1}{4}\right\rceil}\right\rfloor=5$.\\
(b.3) $\delta(F)=2$. Then by Proposition \ref{prop1.5}(ii), we only need to consider $F \in\left\{K_{4}, S_{7}\right\}^{+2}$. \\
(c.3) $\delta(F)=1$. Then by Proposition \ref{prop1.5}(i), we only need to consider the graphs obtained from a graph in $\mathcal{K}(10,7)$ which contains one of graph in $\mathcal{F}_{10}$ as a subgraph by adding an isolated vertex $v$ and an arbitrary edge incident with $v$.\\
In case of (a.3) the unique graph with $\delta(F) \geqslant 3$ and $e(F)=8$ is $W_{5}$ and $\overline{C_{11}^{2}}$ contains a copy of $W_{5}$. As for the graphs in (b.3), we only need to consider $F \in\left\{K_{4}\right\}^{+2}=K_{4}^{+2}$ because of $\delta(F)=2$.  Clearly, we know that $\overline{C_{11}^{2}}$ contains a copy of $K_{4}^-$ but not $K_{4}$ by Fact \ref{fact1.7} and it is easy to check that $\overline{C_{11}^{2}}$ contains a copy of $S_{5, 2}$. Thus it is not hard to show that $\overline{C_{11}^{2}}$ contains each graph in (c.3) except $S_{8}^{*}, S_{8} \cup K_{2}, S_{9}, S_{8}+e,  K_{4} \cup M_{2}, K_{4} \cup S_{3}, K_{4}^{+} \cup K_{2}, G_{7}, G_{8}$ and $G_{9}$, where $G_{7}, G_{8}$ and $G_{9}$ are obtained from $K_{4}^{+}$ by adding a new vertex and joining it to a vertex of $K_{4}^{+}$ with degree one, three and four respectively. Thus we are done for $n=11$.

Let $n=12$. Let $t=\left\lceil\frac{n-1}{4}\right\rceil-1=\left\lceil\frac{12-1}{4}\right\rceil-1=2$. Regardless of isolated vertices of $F$, we consider the following three cases:\\
(a.4) $\delta(F) \geqslant t+1=3$. Then the number of non-isolated vertices of $F$ is at most $\left\lfloor\frac{2(n-3)}{\left\lceil\frac{n-1}{4}\right\rceil}\right\rfloor=6$.\\
(b.4) $\delta(F)=2$. Then by Proposition \ref{prop1.5}(ii), we only need to consider $F \in\left\{K_{4}^{+}, K_{4} \cup K_{2}, S_{8}\right\}^{+2}$.\\
(c.4) $\delta(F)=1$. Then by Proposition \ref{prop1.5}(i), we only need to consider the graphs obtained from a graph in $\mathcal{K}(11,8)$ which contains one of graph in $\mathcal{F}_{11}$ as a subgraph by adding an isolated vertex $v$ and an arbitrary edge incident with $v$.\\
In case of (a.4), the graphs with $\delta(F) \geqslant 3$ and $e(F)=9$ are $K_{5}^{-},K_{3, 3}$ and $G_{10}$, where $G_{10}$ is obtained from two vertex disjoint copies of $K_{3}$ and joining three independent edges between them. Clearly, we know that $\overline{C_{12}^{2}}$ contains a copy of $K_{4}$ but not $K_{5}^{-}$ by Fact \ref{fact1.7} and it is easy to check that $\overline{C_{12}^{2}}$ contains copies of $K_{3, 3}$ and $G_{10}$. As for the graphs in (b.4), we only need to consider $F \in\left\{K_{4}^{+}, K_{4} \cup K_{2}\right\}^{+2}$ because of $\delta(F)=2$ and also it is not hard to see that $\overline{C_{12}^{2}}$ contains each graph in $\left\{K_{4}^{+}, K_{2} \cup K_{4}\right\}^{+2}$. Thus it is not hard to show that $\overline{C_{12}^{2}}$ contains each graph in (c.4) except $S_{9}^{*}, S_{9} \cup K_{2}, S_{10}$ and $S_{9}+e$. Thus we are done for $n=12$. Moreover, $\overline{C_{12}^{2}}$ contains each graph in $\mathcal{K}(12, 8) \backslash\left\{S_{9}\right\}$ as a subgraph by Lemma \ref{lem1.6}.

Let $n=13$. Let $t=\left\lceil\frac{n-1}{4}\right\rceil-1=\left\lceil\frac{13-1}{4}\right\rceil-1=2$. Regardless of isolated vertices of $F$, we consider the following three cases:\\
(a.5) $\delta(F) \geqslant t+1=3$. Then the number of non-isolated vertices of $F$ is at most $\left\lfloor\frac{2(n-3)}{\left\lceil\frac{n-1}{4}\right\rceil}\right\rfloor=6$.\\
(b.5) $\delta(F)=2$. Then by Proposition \ref{prop1.5}(ii), we only need to consider $F \in\left\{S_{9}\right\}^{+2}$.\\
(c.5) $\delta(F)=1$. Then by Proposition \ref{prop1.5}(i), we only need to consider the graphs obtained from a graph in $\mathcal{K}(12,9)$ which contains one of graph in $\mathcal{F}_{12}$ as a subgraph by adding an isolated vertex $v$ and an arbitrary edge incident with $v$.\\
In case of (a.5), the graphs with $\delta(F) \geqslant 3$ and $e(F)=10$ are $K_{5}, W_{6}, G_{11}, G_{12}$ and $G_{13}$, where $G_{11}, G_{12}$ and $G_{13}$ are depicted in Figure 3.
Clearly, we know that $\overline{C_{13}^{2}}$ contains a copy of ${K_{5}^-}$ but not $K_{5}$ by Fact \ref{fact1.7} and it is not hard to check that $\overline{C_{13}^{2}}$ contains copies of $W_{6}, G_{11}, G_{12}$ and $G_{13}$. As for the graphs in (b.5), there is no graph in (b.5) because of $\delta(F)=2$. Thus it is not hard to show that $\overline{C_{13}^{2}}$ contains each graph in (c.5) except $S_{10}^{*}, S_{10} \cup K_{2}, S_{11}$ and $S_{10}+e$. Thus we are done for $n=13$. Moreover, $\overline{C_{13}^{2}}$ contains each graph in $\mathcal{K}(13,9) \backslash\left\{S_{10}\right\}$ as a subgraph by Lemma \ref{lem1.6} and obviously $\overline{C_{13}^{2}}$ contains each graph in $\mathcal{K}(13, 8)$ as a subgraph.
\begin{figure}[h!]\label{fig3}
\begin{center}
\psfrag{1}{$G_{11}$}\psfrag{2}{$G_{12}$}\psfrag{3}{$G_{13}$}
\includegraphics[width=120mm]{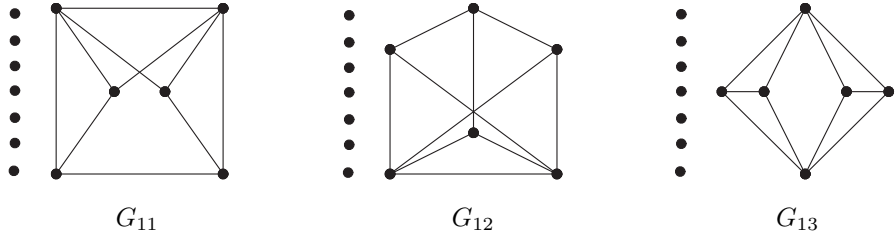} \\
  \caption{$G_{11}, G_{12}$ and $G_{13}$.}
\end{center}
\end{figure}

Let $n=14$. Let $t=\left\lceil\frac{n-1}{4}\right\rceil-1=\left\lceil\frac{14-1}{4}\right\rceil-1=3$. Regardless of isolated vertices of $F$, we consider the following four cases:\\
(a.6) $\delta(F) \geqslant t+1=4$. Then the number of non-isolated vertices of $F$ is at most $\left\lfloor\frac{2(n-3)}{\left\lceil\frac{n-1}{4}\right\rceil}\right\rfloor=5$.\\
(b.6) $\delta(F)=3$. Then by Proposition \ref{prop1.5}(ii), $\overline{C_{14}^{2}}$ contains each copy of $F \in \mathcal{K}(13, 8)^{+3}$.\\
(c.6) $\delta(F)=2$. Then by Proposition \ref{prop1.5}(ii), we only need to consider $F \in\left\{S_{10}\right\}^{+2}$.\\
(d.6) $\delta(F)=1$. Then by Proposition \ref{prop1.5}(i), we only need to consider the graphs obtained from a graph in $\mathcal{K}(13,10)$ which contains one of graph in $\mathcal{F}_{13}$ as a subgraph by adding an isolated vertex $v$ and an arbitrary edge incident with $v$.\\
In case of (a.6) there does not exist a graph with $\delta(F) \geqslant 4$ and $e(F)=11$. As for the graphs in (c.6), there is no graph in (c.6) because of $\delta(F)=2$. Clearly, we know that $\overline{C_{14}^{2}}$ contains a copy of $K_{5}^-$ but not $K_{5}$ by Fact \ref{fact1.7}. Thus it is not hard to show that $\overline{C_{14}^{2}}$ contains each graph in (d.6) except $K_{5}^{+}, K_{5} \cup K_{2}, S_{11}^{*}, S_{11} \cup K_{2}, S_{12}$ and $S_{11}+e$. Thus we are done for $n=14$. Moreover, $\overline{C_{14}^{2}}$ contains each graph in $\mathcal{K}(14, 10) \backslash\left\{S_{11}, K_5\right\}$ as a subgraph by Lemma \ref{lem1.6} and obviously $\overline{C_{14}^{2}}$ contains each graph in $\mathcal{K}(14, 9)$ as a subgraph.

Let $n=15$. Let $t=\left\lceil\frac{n-1}{4}\right\rceil-1=\left\lceil\frac{15-1}{4}\right\rceil-1=3$. Regardless of isolated vertices of $F$, we consider the following four cases:\\
(a.7) $\delta(F) \geqslant t+1=4$. Then the number of non-isolated vertices of $F$ is at most $\left\lfloor\frac{2(n-3)}{\left\lceil\frac{n-1}{4}\right\rceil}\right\rfloor=6$.\\
(b.7) $\delta(F)=3$. Then by Proposition \ref{prop1.5}(ii), $\overline{C_{15}^{2}}$ contains each copy of $F \in \mathcal{K}(14, 9)^{+3}$.\\
(c.7) $\delta(F)=2$. Then by Proposition \ref{prop1.5}(ii), we only need to consider $F \in\left\{S_{11},K_5\right\}^{+2}$.\\
(d.7) $\delta(F)=1$. Then by Proposition \ref{prop1.5}(i), we only need to consider the graphs obtained from a graph in $\mathcal{K}(14,11)$ which contains one of graph in $\mathcal{F}_{14}$ as a subgraph by adding an isolated vertex $v$ and an arbitrary edge incident with $v$.\\
In case of (a.7) the unique graph with $\delta(F) \geqslant 4$ and $e(F)=12$ is $\overline{K_{2}} \vee C_{4}$ and it is easy to check that $\overline{C_{15}^{2}}$ contains a copy of $\overline{K_{2}} \vee C_{4}$. As for the graphs in (c.7), we only need to consider $F \in\left\{K_{5}\right\}^{+2}=K_{5}^{+2}$ because of $\delta(F)=2$. Clearly, we know that $\overline{C_{15}^{2}}$ contains a copy of $K_{5}$ by Fact \ref{fact1.7} and obviously contains a copy of $K_{5}^{+2}$. Thus it is not hard to show that $\overline{C_{15}^{2}}$ contains each graph in (d.7) except for $S_{12}^{*}, S_{12} \cup K_{2}, S_{13}$ and $S_{12}+e$. Thus we are done for $n=15$.
\end{proof}

\begin{lem}\label{lem1.12}
Let $n \geqslant 15$. Then $\overline{C_{n}^{2}}$ contains each copy of $F \in \mathcal{K}(n, n-3)$ unless $F$ contains $S_{n-3}$ as a subgraph.
\end{lem}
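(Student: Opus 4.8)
The plan is to induct on $n$, taking $n=15$ as the base case (there Lemma~\ref{lem1.11} gives exactly the assertion, since $\mathcal{F}_{15}=\{S_{12}\}=\{S_{n-3}\}$). First I would settle the ``unless'' clause once and for all: each vertex of $C_n^2$ has precisely four neighbours, so $\overline{C_n^2}$ is $(n-5)$-regular, while $S_{n-3}=K_{1,n-4}$ has a vertex of degree $n-4>n-5$; hence $\overline{C_n^2}$ contains no copy of $S_{n-3}$, and therefore no graph containing $S_{n-3}$. It thus suffices to show that every $F\in\mathcal{K}(n,n-3)$ with $\Delta(F)\le n-5$ embeds into $\overline{C_n^2}$. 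I would also record the elementary counting fact that, as $n\ge 9$ and $e(F)=n-3$, such an $F$ has at most one vertex of degree $n-5$, and that if it has a vertex of degree $n-5$ then at least $n-9$ of that vertex's neighbours are pendant.

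For the inductive step let $n\ge 16$, set $t=\lceil (n-1)/4\rceil-1\ge 3$, and distinguish cases by $\delta(F)$ (isolated vertices ignored). If $1\le\delta(F)\le t$, I would delete a suitable minimum-degree vertex $w$ and pass to $F':=F-w$, then lift back via Proposition~\ref{prop1.5}. Concretely: when $\delta(F)=1$, take $w$ to be any pendant vertex if $\Delta(F)\le n-6$, and a pendant neighbour of the (unique) maximum-degree vertex if $\Delta(F)=n-5$; in either case $F'\in\mathcal{K}(n-1,(n-1)-3)$ has $\Delta(F')\le n-6<(n-1)-4$, so $F'\not\supseteq S_{(n-1)-3}$, the induction hypothesis gives $\overline{C_{n-1}^2}\supseteq F'$, and Proposition~\ref{prop1.5}(i) gives $\overline{C_n^2}\supseteq F$ because $F\in F'^{+}$. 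When $\delta(F)=2$ one first notes $\Delta(F)\le n-6$ (a vertex of degree $n-5$ would force a pendant vertex, contradicting $\delta(F)=2$); deleting a degree-$2$ vertex gives $F'\in\mathcal{K}(n-1,(n-1)-4)$ with $\Delta(F')\le n-6$, so $F'\notin\mathcal{H}_{n-1}$, which equals $\{S_{n-4}\}$ by Theorem~\ref{thm1.4}; whence Lemma~\ref{lem1.6} yields $\overline{C_{n-1}^2}\supseteq F'$, and Proposition~\ref{prop1.5}(ii) (with $t\ge 2$) gives $\overline{C_n^2}\supseteq F$ since $F\in F'^{+2}$. When $3\le\delta(F)\le t$, delete a minimum-degree vertex; then $F'$ has at most $(n-1)-5$ edges and, on its non-isolated part, minimum degree $\ge\delta(F)-1\ge 2$, so no component of $F'$ is a star and $F'$ is not a subgraph of any star; extending $F'$ to some $F'^{*}\in\mathcal{K}(n-1,(n-1)-4)$ (necessarily distinct from $S_{n-4}$, since $F'$ is not a sub-star) and invoking Lemma~\ref{lem1.6} gives $\overline{C_{n-1}^2}\supseteq F'^{*}\supseteq F'$, and then Proposition~\ref{prop1.5}(ii) gives $\overline{C_n^2}\supseteq F$ because $F\in F'^{+\delta(F)}$ with $\delta(F)\le t$.

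The remaining case $\delta(F)\ge t+1$ is the one I expect to be the main obstacle, since here no uniform reduction is available and one must argue directly. The point is that it occurs only for boundedly many $n$: from $2(n-3)=\sum_v d_F(v)\ge(t+1)(t+2)$ one gets $n\le 26$, and since $\delta(F)\ge t+1\ge \lceil (n-1)/4\rceil$ forces the non-isolated part of $F$ to have at most $\lfloor 8(n-3)/(n-1)\rfloor\le 7$ vertices, one also needs $n-3\le\binom{7}{2}$, i.e. $n\le 24$; for most of these $n$ no graph meeting both constraints exists, and the survivors turn out to be $n\in\{16,17,18,21,24\}$. For $n\in\{16,18,21,24\}$ the support of $F$ sits inside the clique or near-clique of order $\lceil n/3\rceil\ge 6$ provided by Fact~\ref{fact1.7}; the genuinely delicate instance is $n=17$, where $F$ restricted to its (at most $7$) non-isolated vertices is a $4$-regular graph on $7$ vertices, and one must verify, by choosing an appropriate $7$-element subset of $\mathbb{Z}_{17}$, that each such graph embeds into $\overline{C_{17}^2}$, exactly in the style of the ad hoc checks in the proof of Lemma~\ref{lem1.11}. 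This last verification — confirming that a few small dense graphs actually fit inside the sparse-looking $\overline{C_n^2}$ — is the part that is not amenable to a clean structural argument.
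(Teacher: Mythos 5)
Your proposal is correct and follows essentially the same route as the paper: induction on $n$ with base case $n=15$ from Lemma~\ref{lem1.11}, a case split on $\delta(F)$ with Proposition~\ref{prop1.5} and Lemma~\ref{lem1.6} handling $\delta(F)\le t$, and Fact~\ref{fact1.7} plus finite checks (the paper verifies $K_6^-$, $G_{14}$, $G_{15}$ for $n=17$, $K_6$ for $n=18$, etc.) handling $\delta(F)\ge t+1$; your write-up merely supplies details the paper compresses into ``it is easy to see.'' The only slip is at $n=17$, where the support of $F$ may also have $6$ vertices (giving $K_6^-$, covered directly by Fact~\ref{fact1.7}) rather than being $4$-regular on $7$ vertices, but this does not affect the argument.
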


\begin{proof}
We prove the lemma by induction on $n$. For $n=15$, the lemma follows from Lemma \ref{lem1.11}. Suppose it is true for $n-1$. Then by induction hypothesis, $\overline{C_{n-1}^{2}}$ contains each copy of $F \in \mathcal{K}(n-1, n-4)$ unless $F$ contains $S_{n-4}$ as a subgraph. Moreover, $\overline{C_{n-1}^{2}}$ contains each graph in $\mathcal{K}(n-1, n-5) \backslash\left\{S_{n-4}\right\}$ and contains each graph in $\mathcal{K}(n-1, n-6)$ by Lemma \ref{lem1.6}.

It is now sufficient to show that $\overline{C_{n}^{2}}$ contains each copy of $F \in \mathcal{K}(n, n-3)$ unless $F$ contains $S_{n-3}$ as a subgraph. Let $t=\left\lceil\frac{n-1}{4}\right\rceil-1$. Regardless of isolated vertices of $F$, we consider the following two cases:\\
(a) $1 \leq \delta(F) \leq t$. Applying Proposition \ref{prop1.5}, it is easy to see that $\overline{C_{n}^{2}}$ contains each copy of $F$ unless $F \in\left\{S_{n-3} \cup K_{2}, S_{n-3}^{*}, S_{n-3}+e, S_{n-2}\right\}$, in which each graph contains $S_{n-3}$ as a subgraph.\\
(b) $\delta(F) \geqslant t+1$. Then the number of non-isolated vertices of $F$ is at most $\left\lfloor\frac{2(n-3)}{\left\lceil\frac{n-1}{4}\right\rceil}\right\rfloor$.\\
We know that $\overline{C_{n}^{2}}$ contains a copy of $K_{\lfloor\frac{n}{3}\rfloor}$ by Fact \ref{fact1.7}. Now, if $n \geqslant 22$, then we have $\left\lfloor\frac{2(n-3)}{\left\lceil\frac{n-1}{4}\right\rceil}\right\rfloor \leqslant\left\lfloor\frac{n}{3}\right\rfloor$. Thus $\overline{C_{n}^{2}}$ contains a copy of $F$ for $n \geqslant 22$. Let $n=16$. Then the number of non-isolated vertices of $F$ is at most $\left\lfloor\frac{2(n-3)}{\left\lceil\frac{n-1}{4}\right\rceil}\right\rfloor=6$ and $e(F)=13$. By Fact \ref{fact1.7}, $\overline{C_{16}^{2}}$ contains a copy of $K_{6}^-$ which has 14 edges and one can easily check that $\overline{C_{16}^{2}}$ contains a copy of $F$. Let $n=17$. Then the number of non-isolated vertices of $F$ is at most $\left\lfloor\frac{2(n-3)}{\left\lceil\frac{n-1}{4}\right\rceil}\right\rfloor=7$ and $e(F)=14$. Actually $F$ must be one of $\left\{K_{6}^{-}, G_{14}, G_{15}\right\}$ because $\delta(F) \geqslant\left\lceil\frac{n-1}{4}\right\rceil=4$, where $G_{14}$ and $G_{15}$ are depicted in Figure 4. Clearly, we know that $\overline{C_{17}^{2}}$ contains a copy of $K_{6}^{-}$ by Fact \ref{fact1.7} and it is not hard to check that $\overline{C_{17}^{2}}$ contains copies of $G_{14}$ and $G_{15}$. So $\overline{C_{17}^{2}}$ contains a copy of $F$. Let $n=18$. Then the number of non-isolated vertices of $F$ is at most $\left\lfloor\frac{2(n-3)}{\left\lceil\frac{n-1}{4}\right\rceil}\right\rfloor=6$ and $e(F)=15$. Thus $F=K_{6}$ and $\overline{C_{18}^{2}}$ contains a copy of $K_{6}$ by Fact \ref{fact1.7}. Let $n=19$ or $n=20$. Then the number of non-isolated vertices of $F$ is at most $\left\lfloor\frac{2(19-3)}{\left\lceil\frac{19-1}{4}\right\rceil}\right\rfloor=\left\lfloor\frac{2(20-3)}{\left\lceil\frac{20-1}{4}\right\rceil}\right\rfloor=6$ and $e(F)=16$ or $17$. Clearly, such $F$ does not exist. Let $n=21$. Then the number of non-isolated vertices of $F$ is at most $\left\lfloor\frac{2(n-3)}{\left\lceil\frac{n-1}{4}\right\rceil}\right\rfloor=7$ and $e(F)=18$. By Fact \ref{fact1.7}, $\overline{C_{21}^{2}}$ contains a copy of $K_{7}$ which has 21 edges and one can easily check that $\overline{C_{21}^{2}}$ contains a copy of $F$.

This completes the proof.
\end{proof}
\begin{figure}[h!]\label{fig4}
\begin{center}
\psfrag{1}{$G_{14}$}\psfrag{2}{$G_{15}$}
\includegraphics[width=90mm]{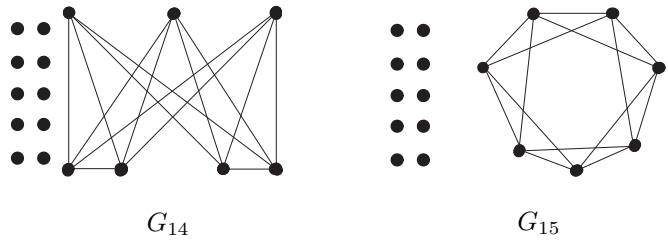} \\
  \caption{$G_{14}$ and $G_{15}$.}
\end{center}
\end{figure}

\begin{lem}\label{lem1.13}
Let $6 \leqslant n \leqslant 18$. Then $\overline{C_{n}^{2}}$ contains each copy of $F \in \mathcal{K}(n, n-2)\backslash \left\{S_{n-4} \cup S_4, S_{n-4} \cup K_3\right\}$ unless $F$ contains one of $\mathcal{E}_{n}$ as a subgraph, where $\mathcal{E}_{n}$ is a family of graphs with $6 \leqslant n \leqslant 18$ as follows:
$$\mathcal{E}_{n}=\begin{cases}S_{3}, & n=6; \\ S_{4}, C_{4}, K_{3}, C_{5}, & n=7; \\ K_{3}, S_{5}, K_{2,3}, & n=8; \\ K_{4}^{-}, S_{6}, F_{5}, & n=9; \\ K_{4}, S_{7}, S_{5,2}, W_5, G_1, & n=10; \\ K_{4}, S_{8}, S_{6,2}, & n=11; \\ K_{5}^{-}, S_{9}, & n=12; \\ S_{10}, K_{5}, & n=13; \\ S_{11}, K_{5}, & n=14; \\ S_{12}, & n=15; \\ S_{13}, & n=16; \\ S_{14}, K_6, & n=17; \\ S_{15}, & n=18, \end{cases}$$ where $G_1$  is depicted in Figure 5.
\end{lem}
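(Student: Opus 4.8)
We argue value by value for $n=6,\dots,18$, each $n$ building on the smaller ones (effectively an induction on $n$), running the case analysis of the proof of Lemma~\ref{lem1.11} one edge higher, i.e.\ with $F\in\mathcal K(n,n-2)$ in place of $F\in\mathcal K(n,n-3)$; the tools are Proposition~\ref{prop1.5}, Fact~\ref{fact1.7}, and Lemmas~\ref{lem1.6}, \ref{lem1.11}, \ref{lem1.12}. For the base cases we inspect $\overline{C_n^2}$ directly. Since $\overline{C_6^2}=M_3$ has only three edges it contains no $4$-edge graph at all, and every $4$-edge graph on at most six vertices has a vertex of degree $\ge 2$, hence contains $S_3\in\mathcal E_6$; so the statement is vacuously true for $n=6$. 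Since $\overline{C_7^2}=C_7$, one enumerates $\mathcal K(7,5)$ and tests each graph against $C_7$: the non-embeddable ones are precisely those containing a member of $\mathcal E_7=\{S_4,C_4,K_3,C_5\}$ (the two graphs removed from the statement are simply set aside).

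For the inductive step, fix $n\in\{8,\dots,18\}$, assume the lemma for all smaller values (and Lemmas~\ref{lem1.6}, \ref{lem1.11}, \ref{lem1.12}), take $F\in\mathcal K(n,n-2)$ other than the two excluded graphs, discard isolated vertices, and split on $\delta(F)$ with $t=\lceil\tfrac{n-1}{4}\rceil-1$. \textbf{(a) $\delta(F)=1$:} delete a pendant vertex $v$, so that $F-v\in\mathcal K(n-1,n-3)$; by the induction hypothesis $\overline{C_{n-1}^2}$ contains $F-v$ unless $F-v$ contains a member of $\mathcal E_{n-1}$ or is one of the two graphs excluded at level $n-1$, and in the favourable case $F\in(F-v)^{+}$, so Proposition~\ref{prop1.5}(i) lifts the embedding to $\overline{C_n^2}$. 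It then remains to handle the finitely many $F$ for which \emph{every} pendant deletion is exceptional; re-attaching the deleted vertex shows these are exactly the graphs containing a member of $\mathcal E_n$, the two graphs excluded at level $n$, and a short list of sporadic $F$ that nonetheless embed --- each such $F$ contains a clique $K_{\lfloor n/3\rfloor}$ or $K_{\lceil n/3\rceil}^{-}$ supplied by Fact~\ref{fact1.7}, or a short cycle or a complete bipartite graph, and is embedded explicitly, just as the $G_i$, $W_5$, $F_5$, $S_{5,2}$, $S_{6,2}$, $K_{2,3}$, $\overline{K_2}\vee C_4$ were in the proof of Lemma~\ref{lem1.11}. \textbf{(b) $2\le\delta(F)\le t$:} delete a minimum-degree vertex $v$; then $F-v$ has $n-1$ vertices and at most $n-4$ edges, so it embeds in $\overline{C_{n-1}^2}$ by whichever of Lemmas~\ref{lem1.6}, \ref{lem1.11}, \ref{lem1.12} applies unless $F-v$ belongs to that lemma's short exceptional list (a few stars, cliques and small dense graphs); since $F$ is a subgraph of a member of $(F-v)^{+t}$, Proposition~\ref{prop1.5}(ii) lifts the embedding, and the surviving exceptional $F$ are precisely those recorded in $\mathcal E_n$ (such as $S_{5,2}$, $S_{6,2}$, $K_5^{-}$) or are handled via Fact~\ref{fact1.7}. \textbf{(c) $\delta(F)\ge t+1$:} then $F$ has at most $\big\lfloor\tfrac{2(n-2)}{\lceil(n-1)/4\rceil}\big\rfloor$ non-isolated vertices; whenever this bound is at most $\lfloor n/3\rfloor$ we embed $F$ into the clique of Fact~\ref{fact1.7}, and for the finitely many remaining small $n$ we list all $F$ with $\delta(F)\ge t+1$ and $e(F)=n-2$ and check each against $\overline{C_n^2}$, the only genuine failures being the small cliques and wheels already in $\mathcal E_n$ (for instance $K_4$ and $W_5$ for $n=10$, $K_5$ for $n=13,14$, $K_6$ for $n=17$).

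Two comments. First, the graphs $S_{n-4}\cup S_4$ and $S_{n-4}\cup K_3$ are excluded from the statement because they violate the dichotomy --- they contain no member of $\mathcal E_n$, yet are themselves obstructions --- so the lemma asserts nothing about them and the proof merely sets them aside; showing that they really fail to embed in $\overline{C_n^2}$ is not needed here and belongs to the proof of Theorem~\ref{thm1.1} (alongside the fact that $\overline{C_n^2}$ avoids $S_{n-3}$, cf.\ Theorem~\ref{thm1.4}). Second, and this is the main obstacle, part~(a) (and to a lesser extent~(b)) involves finite but intricate bookkeeping: for each $n\in\{8,\dots,18\}$ one must pin down exactly which deletions $F-v$ land in $\mathcal E_{n-1}$ or the level-$(n-1)$ exceptional list, confirm that the graphs which genuinely fail to embed in $\overline{C_n^2}$ match $\mathcal E_n$ precisely --- neither more nor fewer --- and dispatch every sporadic survivor through Fact~\ref{fact1.7} or an explicit embedding. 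This is in essence a one-edge extension of the $n=8,\dots,15$ analysis already performed in the proof of Lemma~\ref{lem1.11}, with the same auxiliary graphs recurring.
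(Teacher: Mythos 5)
Your plan reproduces the paper's proof essentially step for step: the same direct inspection for the base cases $n=6,7$, the same split on $\delta(F)$ against the threshold $t=\lceil\frac{n-1}{4}\rceil-1$, the same lifting of embeddings via Proposition~\ref{prop1.5} from the exceptional lists supplied by Lemmas~\ref{lem1.6}, \ref{lem1.11} and \ref{lem1.12}, and the same use of Fact~\ref{fact1.7} to kill the case $\delta(F)\geqslant t+1$ once the number of non-isolated vertices drops below $\lfloor n/3\rfloor$. The only difference is one of completeness rather than method: the paper actually executes, for each $n$ from $8$ to $18$, the enumeration of which graphs survive each reduction and the verification that every genuine non-embeddable $F$ contains a member of $\mathcal{E}_{n}$ (introducing the sporadic graphs $G_{16}$--$G_{43}$ along the way), whereas you defer exactly this finite bookkeeping, which is where all the substance of the lemma resides. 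So the strategy is correct and identical to the paper's, but the write-up remains an outline until that case-by-case verification is carried out.
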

\begin{figure}[h!]\label{fig5}
\begin{center}
\psfrag{1}{$G_{1}$}
\includegraphics[width=30mm]{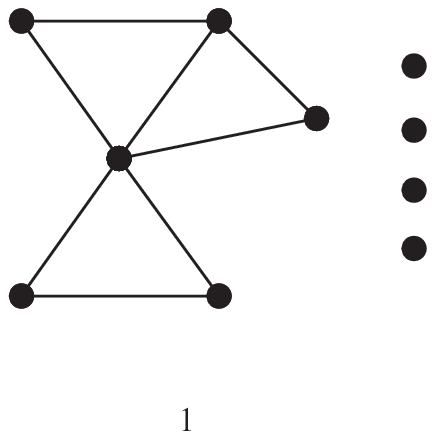} \\
  \caption{$G_{1}$.}
\end{center}
\end{figure}
\begin{proof}
Let $n=6$. Then $\overline{C_{6}}=M_{3}$ and $\mathcal{K}(6,3)=\left\{M_{3}, S_{3} \cup K_{2}, P_{4}, S_{4}, K_{3}\right\}$. Hence it is easy to see that each graph in $\mathcal{K}(6,4)$ contains a copy of $S_{3}$. Then the lemma holds for $n=6$.

Let $n=7$. Then $\overline{C_{7}^{2}}=C_{7}$ and $\mathcal{K}(7,4)=\{M_{2} \cup S_{3}, P_{4} \cup K_{2}, 2 S_{3}, P_{5}, K_{3} \cup K_{2}, C_{4}, T_{1, 1, 2}, S_{4} \cup K_{2}, K_{3}^{+}$, $S_{5}\}$. Clearly, now we only need to consider $\left\{M_{2} \cup  S_{3}, P_{4} \cup K_{2} \cup K_{1}, 2S_{3} \cup K_{1}, P_{5} \cup \overline{K_{2}}\right\}+e$. It is not hard to show that $\overline{C_{7}^{2}}$ contains copies of the above-mentioned graphs except $T_{1, 1, 3}, T_{1,2,2}, G_{2}, G_{3}, C_{4}^{+}, C_{5}, S_{4} \cup S_{3}, D_{6}, K_{3} \cup S_{3}, C_{4} \cup  K_{2}, K_{3}^{+} \cup  K_{2}, K_{3} \cup M_{2}$ and $T_{1,2,2} \cup  K_{2}$. Then obviously the lemma holds for $n=7$.

Let $n=8$. Then clearly $\overline{C_{8}^{2}}$ does not contain copies of $K_{3}$ and $K_{2,3}$ but contains copies of $D_{6}$ and $D_{7}$. By Lemma \ref{lem1.11}, we know that $\mathcal{K}(8.5)=\{M_{2} \cup P_{4}, M_{2} \cup S_{4}, S_{3} \cup S_{4} \cup K_{1}, P_5 \cup K_{2} \cup K_{1}, T_{1, 1, 2}$ $\cup K_{2} \cup K_{1}, C_{4}^{+} \cup \overline{K_{3}}, P_4 \cup S_{3} \cup K_{1}, P_{6} \cup \overline{K_{2}}, T_{1,2,2} \cup\overline{K_{2}}, 2 S_{3} \cup K_{2}, T_{1, 1, 3} \cup \overline{K_{2}}, D_{6} \cup \overline{K_{2}}, C_{4} \cup K_{2} \cup \overline{K_{2}}$, $C_5 \cup \overline{K_{3}}\} \cup \mathcal{F}_{8}^{\prime}$ and obviously each graph in $\mathcal{F}_{8}^{\prime}$ contains one of $\mathcal{E}_{8}$ as a subgraph. Clearly, now we only need to consider $\mathcal{K}(8,5) \backslash \mathcal{F}_{8}^{\prime}+e$. It is straightforward to check that $\overline{C_{8}^{2}}$ contains copies of the above-mentioned graphs except $T_{1, 1, 1, 2} \cup K_{2}, T_{1, 1, 2, 2}, T_{1, 1, 1, 3}, G_{16}, G_{17}, S_{5}^{*} \cup K_{2}, S_{5} \cup S_{3}, 2S_{4}, K_{2,3}, K_{3} \cup S_{4}, K_{3} \cup P_{4}, K_{3} \cup S_{3} \cup K_{2}, K_{3}^{+} \cup M_{2}, K_{3}^{+} \cup S_{3}, G_{5}, G_{6}, G_{2} \cup K_{2}, G_{3} \cup K_{2}, G_{4} \cup K_{2}, G_{18}, G_{19}, G_{20}, G_{21}, G_{22}, G_{23}, C_{5}+e$ and $K_{4}^{-} \cup K_{2}$, where $G_{18}-G_{23}$ are depicted in Figure 6, $G_{16}$ is obtained from $D_{6}$ by adding a new vertex and joining it to a vertex of $D_{6}$ with degree three and $G_{17}$ is obtained from $C_{4}^{+}$ by adding a new vertex and joining it to a vertex of $C_{4}^{+}$ with degree three. So the lemma holds for $n=8$.
\begin{figure}[h!]\label{fig6}
\begin{center}
\psfrag{8}{$G_{18}$}\psfrag{9}{$G_{19}$}\psfrag{0}{$G_{20}$}\psfrag{1}{$G_{21}$}\psfrag{2}{$G_{22}$}\psfrag{3}{$G_{23}$}
\includegraphics[width=150mm]{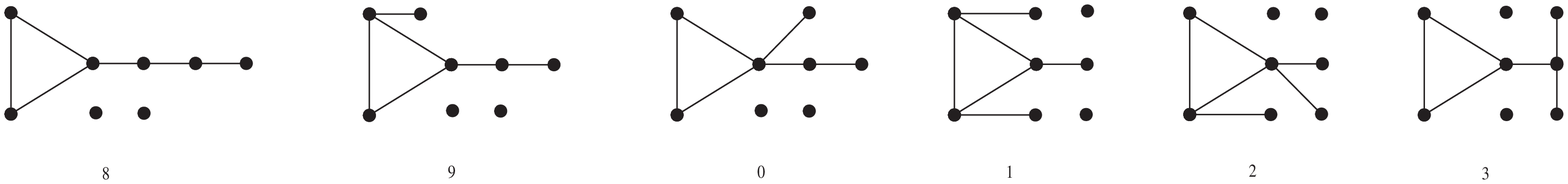} \\
  \caption{$G_{18}-G_{23}$.}
\end{center}
\end{figure}

For $n \geqslant 9$, it is clear that the graphs in $\mathcal{K}(n, n-2)$ containing one of $S_{n-3}$, $S_{n-4} \cup S_{4}$ and $S_{n-4} \cup K_{3}$ as a subgraph form the set $\mathcal{L}_{n}$.

Let $n=9$. Let $t=\left\lceil\frac{n-1}{4}\right\rceil-1=\left\lceil\frac{9-1}{4}\right\rceil-1=1$. Regardless of isolated vertices of $F$, we consider the following two cases:\\
(a.1) $\delta(F) \geqslant t+1=2$. Then the number of non-isolated vertices of $F$ is at most $\left\lfloor\frac{2(n-2)}{\left\lceil\frac{n-1}{4}\right\rceil}\right\rfloor=7$.\\
(b.1) $\delta(F)=1$. Then by Proposition \ref{prop1.5}(i), we only need to consider the graphs obtained from a graph $E$ in $\mathcal{K}(8, 6)$ which contains one of graphs in $\mathcal{E}_{8} \cup\left\{2S_{4}\right\}$ as a subgraph by adding an isolated vertex $v$ and an arbitrary edge incident with $v$.\\
In case of (a.1) the graphs with $\delta(F) \geqslant 2$ and $e(F)=7$ are $S_{5,2}, K_{1} \vee P_{4}, G_{24}, G_{25}, G_{26}, G_{27}, G_{28}, G_{29}, C_{7}$ and $K_{3} \cup C_{4}$, where $G_{24}-G_{29}$ are depicted in Figure 7. A simple observation shows that $\overline{C_{9}^{2}}$ contains each graph of $K_{2,3}, C_{5}, C_{6}, C_{7}$ and $K_{3}$ as a subgraph but does not contain copies of $K_{4}^-$ and $F_{5}$. Thus it is easy to check that $\overline{C_{9}^{2}}$ contains each graph in (a.1) except $S_{5,2}, K_{1} \vee P_{4}$ and $G_{24}$ which obviously contain a copy of $K_{4}^{-}$. As for the graphs in (b.1), note that if the graph $E$ contains one of graphs in $\mathcal{E}_{9}$ as a subgraph, then we do not need to consider $E^{+}$. Thus, without considering the above situations, it is not hard to show that $\overline{C_9^{2}}$ contains each graph left in (b.1) except those graphs in $\mathcal{L}_{9} \backslash\left\{G_{30}, G_{31}, S_{7}^{*}, S_{7} \cup K_{2}, S_{8}, S_{7}+e\right\}$, where $G_{30}$ is obtained from $F_{5}$ by adding a new vertex and joining it to a vertex of $F_{5}$ with degree four and $G_{31}$ is obtained from $G_{6}$ by adding a new vertex and joining it to a vertex of $G_{6}$ with degree four. So we are done for $n=9$. Moreover, $\overline{C_{9}^{2}}$ contains each copy of $F \in \mathcal{K}(9,6)$ unless $F$ contains one of graphs in $\mathcal{F}_{9}$ as a subgraph by Lemma \ref{lem1.11}.
\begin{figure}[h!]\label{fig7}
\begin{center}
\psfrag{4}{$G_{24}$}\psfrag{5}{$G_{25}$}\psfrag{6}{$G_{26}$}\psfrag{7}{$G_{27}$}\psfrag{8}{$G_{28}$}\psfrag{9}{$G_{29}$}
\includegraphics[width=150mm]{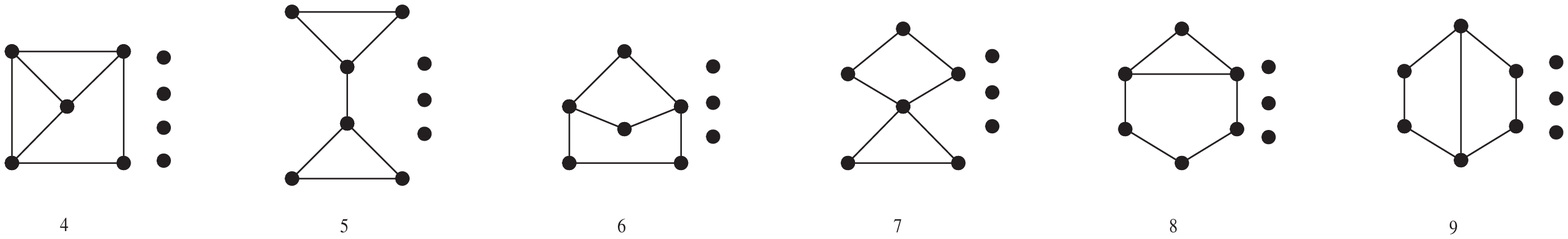} \\
  \caption{$G_{24}-G_{29}$.}
\end{center}
\end{figure}

Let $n=10$. Let $t=\left\lceil\frac{n-1}{4}\right\rceil-1=\left\lceil\frac{10-1}{4}\right\rceil-1=2$. Regardless of isolated vertices of $F$, we consider the following three cases:\\
(a.2) $\delta(F) \geqslant t+1=3$. Then the number of non-isolated vertices of $F$ is at most $\left\lfloor\frac{2(n-2)}{\left\lceil\frac{n-1}{4}\right\rceil}\right\rfloor=5$.\\
(b.2) $\delta(F)=2$. Then by Proposition \ref{prop1.5}(ii), we only need to consider the graphs obtained from a graph $D$ in $\mathcal{K}(9, 6)$ which contains one of graphs in $\mathcal{F}_{9}$ as a subgraph by adding an isolated vertex $v$ and two arbitrary edges incident with $v$.\\
(c.2) $\delta(F)=1$. Then by Proposition \ref{prop1.5}(i), we only need to consider the graphs obtained from a graph $E$ in $\mathcal{K}(9, 7)$ which contains one of graphs in $\mathcal{E}_{9} \cup \left\{ S_{5} \cup S_{4}, S_{5} \cup K_{3}\right\}$ as a subgraph by adding an isolated vertex $v$ and an arbitrary edge incident with $v$.\\
In case of (a.2) the unique graph with $\delta(F) \geqslant 3$ and $e(F)=8$ is $W_{5}$ which belongs to $\mathcal{E}_{10}$. Thus there is no graph in (a.2). A simple observation shows that $\overline{C_{10}^{2}}$ contains each graph of $K_{4}^{-}$and $F_{5}$ as a subgraph but does not contain copies of $K_{4}, S_{5, 2}, G_{1}$ and $W_{5}$. As for the graphs in (b.2), note that if the graph $D$ contains a copy of $S_{6}$, then we do not need to consider $D^{+2}$ because of $\delta(F)=2$. Moreover, we do not need to consider the graph $D=K_{4}$ belonging to $\mathcal{E}_{10}$. Thus it is easy to check that $\overline{C_{10}^{2}}$ contains each graph left in (b.2) except $G_{1}$ which belongs to $\mathcal{E}_{10}$. As for the graphs in (c.2), note that if the graph $E$ contains one of graphs in $\mathcal{E}_{10}$ as a subgraph, then we do not need to consider $E^{+}$. Thus, without considering the above situations, it is not hard to show that $\overline{C_{10}^{2}}$ contains each graph left in (c.2) except those graphs in $\mathcal{L}_{10} \backslash \left\{S_{8}^{*}, S_{8} \cup K_{2}, S_{9}, S_{8}+e\right\}$. So we are done for $n=10$. Moreover, $\overline{C_{10}^{2}}$ contains each copy of $F \in \mathcal{K}(10,7)$ unless $F$ contains one of graphs in $\mathcal{F}_{10}$ as a subgraph by Lemma \ref{lem1.11}.

Let $n=11$. Let $t=\left\lceil\frac{n-1}{4}\right\rceil-1=\left\lceil\frac{11-1}{4}\right\rceil-1=2$. Regardless of isolated vertices of $F$, we consider the following three cases:\\
(a.3) $\delta(F) \geqslant t+1=3$. Then the number of non-isolated vertices of $F$ is at most $\left\lfloor\frac{2(n-2)}{\left\lceil\frac{n-1}{4}\right\rceil}\right\rfloor=6$.\\
(b.3) $\delta(F)=2$. Then by Proposition \ref{prop1.5}(ii), we only need to consider the graphs obtained from a graph $D$ in $\mathcal{K}(10, 7)$ which contains one of graphs in $\mathcal{F}_{10}$ as a subgraph by adding an isolated vertex $v$ and two arbitrary edges incident with $v$.\\
(c.3) $\delta(F)=1$. Then by Proposition \ref{prop1.5}(i), we only need to consider the graphs obtained from a graph $E$ in $\mathcal{K}(10, 8)$ which contains one of graphs in $\mathcal{E}_{10} \cup \left\{S_{6} \cup S_{4}, S_{6} \cup K_{3}\right\}$ as a subgraph by adding an isolated vertex $v$ and an arbitrary edge incident with $v$.\\
In case of (a.3) the graphs with $\delta(F) \geqslant 3$ and $e(F)=9$ are $K_{5}^{-}, K_{3, 3}$ and $G_{10}$. A simple observation shows that $\overline{C_{11}^{2}}$ contains each graph of $K_{4}^{-}, K_{3, 3}, G_{10}, S_{5, 2}, W_{5}$ and $G_{1}$ as a subgraph but does not contain copies of $K_{4}$ and $S_{6,2}$. As for the graphs in (b.3), note that if the graph $D$ contains a copy of $S_{7}$, then we do not need to consider $D^{+2}$ because of $\delta(F)=2$. Moreover, we do not need to consider the graph $D$ which contains a copy of $K_{4}$ belonging to $\mathcal{E}_{11}$, i.e. $D$ is either $K_{4}^{+}$ or $K_{4} \cup K_{2}$ under the circumstance. Thus it is easy to check that $\overline{C_{11}^{2}}$ contains each graph left in (b.3) except $S_{6, 2}$ which belongs to $\mathcal{E}_{11}$. As for the graphs in (c.3), note that if the graph $E$ contains one of graphs in $\mathcal{E}_{11}$ as a subgraph, then we do not need to consider $E^{+}$. Thus, without considering the above situations, it is not hard to show that $\overline{C_{11}^{2}}$ contains each graph left in (c.3) except those graphs in $\mathcal{L}_{11}\backslash \left\{S_{9}^{*}, S_{9} \cup K_{2}, S_{10}, S_{9}+e\right\}$. So we are done for $n=11$. Moreover, $\overline{C_{11}^{2}}$ contains each copy of $F \in \mathcal{K}(11, 8)$ unless $F$ contains one of graphs in $\mathcal{F}_{11}$ as a subgraph by Lemma \ref{lem1.11}.

Let $n=12$. Let $t=\left\lceil\frac{n-1}{4}\right\rceil-1=\left\lceil\frac{12-1}{4}\right\rceil-1=2$. Regardless of isolated vertices of $F$, we consider the following three cases:\\
(a.4) $\delta(F) \geqslant t+1=3$. Then the number of non-isolated vertices of $F$ is at most $\left\lfloor\frac{2(n-2)}{\left\lceil\frac{n-1}{4}\right\rceil}\right\rfloor=6$.\\
(b.4) $\delta(F)=2$. Then by Proposition \ref{prop1.5}(ii), we only need to consider the graphs obtained from a graph $D$ in $\mathcal{K}(11, 8)$ which contains one of graphs in $\mathcal{F}_{11}$ as a subgraph by adding an isolated vertex $v$ and two arbitrary edges incident with $v$.\\
(c.4) $\delta(F)=1$. Then by Proposition \ref{prop1.5}(i), we only need to consider the graphs obtained from a graph $E$ in $\mathcal{K}(11, 9)$ which contains one of graphs in $\mathcal{E}_{11} \cup \left\{S_{7} \cup S_{4}, S_{7} \cup K_{3}\right\}$ as a subgraph by adding an isolated vertex $v$ and an arbitrary edge incident with $v$.\\
In case of (a.4) the graphs with $\delta(F) \geqslant 3$ and $e(F)=10$ are $K_{5}, W_{6}, G_{11}, G_{12}$ and $G_{13}$. A simple observation shows that $\overline{C_{12}^{2}}$ contains each graph of $K_{4}, W_{6}, S_{6,2}, G_{11}, G_{12}$ and $G_{13}$ as a subgraph but does not contain a copy of $K_{5}^{-}$. As for the graphs in (b.4), note that if the graph $D$ contains a copy of $S_{8}$, then we do not need to consider $D^{+2}$ because of $\delta(F)=2$. Thus it is easy to check that $\overline{C_{12}^{2}}$ contains each graph left in (b.4). As for the graphs in (c.4), note that if the graph $E$ contains one of graphs in $\mathcal{E}_{12}$ as a subgraph, then we do not need to consider $E^{+}$. Thus, without considering the above situations, it is not hard to show that $\overline{C_{12}^{2}}$ contains each graph left in (c.4) except those graphs in $\mathcal{L}_{12}\backslash \left\{S_{10}^{*}, S_{10} \cup K_{2}, S_{11}, S_{10}+e\right\}$. So we are done for $n=12$. Moreover, $\overline{C_{12}^{2}}$ contains each copy of $F \in \mathcal{K}(12, 9)$ unless $F$ contains one of graphs in $\mathcal{F}_{12}$ as a subgraph by Lemma \ref{lem1.11}.

Let $n=13$. Let $t=\left\lceil\frac{n-1}{4}\right\rceil-1=\left\lceil\frac{13-1}{4}\right\rceil-1=2$. Regardless of isolated vertices of $F$, we consider the following three cases:\\
(a.5) $\delta(F) \geqslant t+1=3$. Then the number of non-isolated vertices of $F$ is at most $\left\lfloor\frac{2(n-2)}{\left\lceil\frac{n-1}{4}\right\rceil}\right\rfloor=7$.\\
(b.5) $\delta(F)=2$. Then by Proposition \ref{prop1.5}(ii), we only need to consider the graphs obtained from a graph $D$ in $\mathcal{K}(12, 9)$ which contains one of graphs in $\mathcal{F}_{12}$ as a subgraph by adding an isolated vertex $v$ and two arbitrary edges incident with $v$.\\
(c.5) $\delta(F)=1$. Then by Proposition \ref{prop1.5}(i), we only need to consider the graphs obtained from a graph $E$ in $\mathcal{K}(12, 10)$ which contains one of graphs in $\mathcal{E}_{12} \cup \left\{S_{8} \cup S_{4}, S_{8} \cup K_{3}\right\}$ as a subgraph by adding an isolated vertex $v$ and an arbitrary edge incident with $v$.\\
In case of (a.5) the graphs with $\delta(F) \geqslant 3$ and $e(F)=11$ are $G_{32}-G_{40}$, which are depicted in Figure 8. A simple observation shows that $\overline{C_{13}^{2}}$ contains each graph of $K_{5}^-, G_{32}-G_{40}$ and $S_{6, 2}+e$ as a subgraph but does not contain a copy of $K_{5}$. As for the graphs in (b.5), note that if the graph $D$ contains a copy of $S_{9}$, then we do not need to consider $D^{+2}$ because of $\delta(F)=2$. Thus it is easy to check that $\overline{C_{13}^{2}}$ contains each graph left in (b.5). As for the graphs in (c.5), note that if the graph $E$ contains one of graphs in $\mathcal{E}_{13}$ as a subgraph, then we do not need to consider $E^{+}$. Thus, without considering the above situations, it is not hard to show that $\overline{C_{13}^{2}}$ contains each graph left in (c.5) except those graphs in $\mathcal{L}_{13}\backslash \left\{S_{11}^{*}, S_{11} \cup K_{2}, S_{12}, S_{11}+e\right\}$. So we are done for $n=13$. Moreover, $\overline{C_{13}^{2}}$ contains each copy of $F \in \mathcal{K}(13, 9)\backslash\left\{S_{10}\right\}$ as a subgraph by Lemma \ref{lem1.6} and $\overline{C_{13}^{2}}$ contains each copy of $F \in \mathcal{K}(13, 10)$ unless $F$ contains one of graphs in $\mathcal{F}_{13}$ as a subgraph by Lemma \ref{lem1.11}.
\begin{figure}[h!]\label{fig8}
\begin{center}
\psfrag{2}{$G_{32}$}\psfrag{3}{$G_{33}$}\psfrag{4}{$G_{34}$}\psfrag{5}{$G_{35}$}\psfrag{6}{$G_{36}$}\psfrag{7}{$G_{37}$}\psfrag{8}{$G_{38}$}\psfrag{9}{$G_{39}$}
\psfrag{0}{$G_{40}$}
\includegraphics[width=160mm]{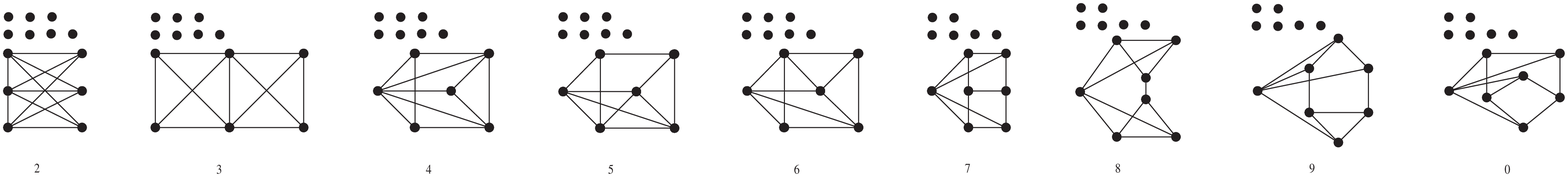} \\
  \caption{$G_{32}-G_{40}$.}
\end{center}
\end{figure}

Let $n=14$. Let $t=\left\lceil\frac{n-1}{4}\right\rceil-1=\left\lceil\frac{14-1}{4}\right\rceil-1=3$. Regardless of isolated vertices of $F$, we consider the following four cases:\\
(a.6) $\delta(F) \geqslant t+1=4$. Then the number of non-isolated vertices of $F$ is at most $\left\lfloor\frac{2(n-2)}{\left\lceil\frac{n-1}{4}\right\rceil}\right\rfloor=6$.\\
(b.6) $\delta(F)=3$. Then by Proposition \ref{prop1.5}(ii), we only need to consider $F \in\left\{S_{10}\right\}^{+3}$.\\
(c.6) $\delta(F)=2$. Then by Proposition \ref{prop1.5}(ii), we only need to consider the graphs obtained from a graph $D$ in $\mathcal{K}(13, 10)$ which contains one of graphs in $\mathcal{F}_{13}$ as a subgraph by adding an isolated vertex $v$ and two arbitrary edges incident with $v$.\\
(d.6) $\delta(F)=1$. Then by Proposition \ref{prop1.5}(i), we only need to consider the graphs obtained from a graph $E$ in $\mathcal{K}(13, 11)$ which contains one of graphs in $\mathcal{E}_{13} \cup \left\{S_{9} \cup S_{4}, S_{9} \cup K_{3}\right\}$ as a subgraph by adding an isolated vertex $v$ and an arbitrary edge incident with $v$.\\
In case of (a.6) the unique graph with $\delta(F) \geqslant 4$ and $e(F)=12$ is $\overline{K_{2}} \vee C_{4}$. A simple observation shows that $\overline{C_{14}^{2}}$ contains copies of $K_{5}^{-}$and $\overline{K_{2}} \vee C_{4}$ but does not contain a copy of $K_{5}$. As for the graphs in (b.6), there is no graph in (b.6) because of $\delta(F)=3$. As for the graphs in (c.6), note that if the graph $D$ contains a copy of $S_{10}$, then we do not need to consider $D^{+2}$ because of $\delta(F)=2$. Moreover, we do not need to consider the graph $D$ which contains a copy of $K_{5}$ belonging to $\mathcal{E}_{14}$. Thus, there is no graph in (c.6). As for the graphs in (d.6), note that if the graph $E$ contains one of graphs in $\mathcal{E}_{14}$ as a subgraph, then we do not need to consider $E^{+}$. Thus, without considering the above situations, it is not hard to show that $\overline{C_{14}^{2}}$ contains each graph left in (d.6) except those graphs in $\mathcal{L}_{14} \backslash\left\{S_{12}^{*}, S_{12} \cup K_{2}, S_{13}, S_{12}+e\right\}$. So we are done for $n=14$. Moreover, $\overline{C_{14}^{2}}$ contains each graph in $\mathcal{K}(14,10)\backslash\left\{S_{11},  K_{5}\right\}$  as a subgraph by Lemma \ref{lem1.6} and $\overline{C_{14}^{2}}$ contains each copy of $F \in \mathcal{K}(14, 11)$ unless $F$ contains one of graphs in $\mathcal{F}_{14}$ as a subgraph by Lemma \ref{lem1.11}.

Let $n=15$. Let $t=\left\lceil\frac{n-1}{4}\right\rceil-1=\left\lceil\frac{15-1}{4}\right\rceil-1=3$. Regardless of isolated vertices of $F$, we consider the following four cases:\\
(a.7) $\delta(F) \geqslant t+1=4$. Then the number of non-isolated vertices of $F$ is at most $\left\lfloor\frac{2(n-2)}{\left\lceil\frac{n-1}{4}\right\rceil}\right\rfloor=6$.\\
(b.7) $\delta(F)=3$. Then by Proposition \ref{prop1.5}(ii), we only need to consider $F \in\left\{S_{11}, K_5\right\}^{+3}$.\\
(c.7) $\delta(F)=2$. Then by Proposition \ref{prop1.5}(ii), we only need to consider the graphs obtained from a graph $D$ in $\mathcal{K}(14, 11)$ which contains one of graphs in $\mathcal{F}_{14}$ as a subgraph by adding an isolated vertex $v$ and two arbitrary edges incident with $v$.\\
(d.7) $\delta(F)=1$. Then by Proposition \ref{prop1.5}(i), we only need to consider the graphs obtained from a graph $E$ in $\mathcal{K}(14, 12)$ which contains one of graphs in $\mathcal{E}_{14} \cup \left\{S_{10} \cup S_{4}, S_{10} \cup K_{3}\right\}$ as a subgraph by adding an isolated vertex $v$ and an arbitrary edge incident with $v$.\\
In case of (a.7) the unique graph with $\delta(F) \geqslant 4$ and $e(F)=13$ is $\overline{K_{2}} \vee K_{4}^-$. A simple observation shows that $\overline{C_{15}^{2}}$ contains copies of $K_{5}$ and $\overline{K_{2}} \vee K_{4}^-$ but does not contain a copy of $K_{6}^-$. As for the graphs in (b.7), we only need to consider $F \in\left\{K_{5}\right\}^{+3}$ because of $\delta(F)=3$. As for the graphs in (c.7), note that if the graph $D$ contains a copy of $S_{11}$, then we do not need to consider $D^{+2}$ because of $\delta(F)=2$. Thus it is easy to check that $\overline{C_{15}^{2}}$ contains each graph left in (b.7) as well as (c.7). As for the graphs in (d.7), note that if the graph $E$ contains one of graphs in $\mathcal{E}_{15}$ as a subgraph, then we do not need to consider $E^{+}$. Thus, without considering the above situations, it is not hard to show that $\overline{C_{15}^{2}}$ contains each graph left in (d.7) except those graphs in $\mathcal{L}_{15} \backslash\left\{S_{13}^{*}, S_{13} \cup K_{2}, S_{14}, S_{13}+e\right\}$. So we are done for $n=15$. Moreover, $\overline{C_{15}^{2}}$ contains each graph in $\mathcal{K}(15, 11)\backslash\left\{S_{12}\right\}$  as a subgraph by Lemma \ref{lem1.6} and $\overline{C_{15}^{2}}$ contains each copy of $F \in \mathcal{K}(15, 12)$ unless $F$ contains one of graphs in $\mathcal{F}_{15}$ as a subgraph by Lemma \ref{lem1.11}.

Let $n=16$. Let $t=\left\lceil\frac{n-1}{4}\right\rceil-1=\left\lceil\frac{16-1}{4}\right\rceil-1=3$. Regardless of isolated vertices of $F$, we consider the following four cases:\\
(a.8) $\delta(F) \geqslant t+1=4$. Then the number of non-isolated vertices of $F$ is at most $\left\lfloor\frac{2(n-2)}{\left\lceil\frac{n-1}{4}\right\rceil}\right\rfloor=7$.\\
(b.8) $\delta(F)=3$. Then by Proposition \ref{prop1.5}(ii), we only need to consider $F \in\left\{S_{12}\right\}^{+3}$.\\
(c.8) $\delta(F)=2$. Then by Proposition \ref{prop1.5}(ii), we only need to consider the graphs obtained from a graph $D$ in $\mathcal{K}(15, 12)$ which contains one of graphs in $\mathcal{F}_{15}$ as a subgraph by adding an isolated vertex $v$ and two arbitrary edges incident with $v$.\\
(d.8) $\delta(F)=1$. Then by Proposition \ref{prop1.5}(i), we only need to consider the graphs obtained from a graph $E$ in $\mathcal{K}(15, 13)$ which contains one of graphs in $\mathcal{E}_{15} \cup \left\{S_{11} \cup S_{4}, S_{11} \cup K_{3}\right\}$ as a subgraph by adding an isolated vertex $v$ and an arbitrary edge incident with $v$.\\
In case of (a.8) the graphs with $\delta(F) \geqslant 4$ and $e(F)=14$ are $K_{6}^{-}, G_{14}$ and $G_{15}$. A simple observation shows that $\overline{C_{16}^{2}}$ contains copies of $K_{6}^-, G_{14}$ and $G_{15}$ but does not contain a copy of $K_{6}$. As for the graphs in (b.8), there is no graph in (b.8) because of $\delta(F)=3$. As for the graphs in (c.8), note that if the graph $D$ contains a copy of $S_{12}$, then we do not need to consider $D^{+2}$ because of $\delta(F)=2$. Thus there is no graph in (c.8). As for the graphs in (d.8), note that if the graph $E$ contains one of graphs in $\mathcal{E}_{16}$ as a subgraph, then we do not need to consider $E^{+}$. Thus, without considering the above situations, it is not hard to show that $\overline{C_{16}^{2}}$ contains each graph left in (d.8) except those graphs in $\mathcal{L}_{16} \backslash\left\{S_{14}^{*}, S_{14} \cup K_{2}, S_{15}, S_{14}+e\right\}$. So we are done for $n=16$. Moreover, $\overline{C_{16}^{2}}$ contains each graph in $\mathcal{K}(16,12)\backslash\left\{S_{13}\right\}$  as a subgraph by Lemma \ref{lem1.6} and $\overline{C_{16}^{2}}$ contains each copy of $F \in \mathcal{K}(16, 13)$ unless $F$ contains $S_{13}$ as a subgraph by Lemma \ref{lem1.12}.

Let $n=17$. Let $t=\left\lceil\frac{n-1}{4}\right\rceil-1=\left\lceil\frac{17-1}{4}\right\rceil-1=3$. Regardless of isolated vertices of $F$, we consider the following four cases:\\
(a.9) $\delta(F) \geqslant t+1=4$. Then the number of non-isolated vertices of $F$ is at most $\left\lfloor\frac{2(n-2)}{\left\lceil\frac{n-1}{4}\right\rceil}\right\rfloor=7$.\\
(b.9) $\delta(F)=3$. Then by Proposition \ref{prop1.5}(ii), we only need to consider $F \in\left\{S_{13}\right\}^{+3}$.\\
(c.9) $\delta(F)=2$. Then by Proposition \ref{prop1.5}(ii), we only need to consider the graphs obtained from a graph $D$ in $\mathcal{K}(16, 13)$ which contains $S_{13}$ as a subgraph by adding an isolated vertex $v$ and two arbitrary edges incident with $v$.\\
(d.9) $\delta(F)=1$. Then by Proposition \ref{prop1.5}(i), we only need to consider the graphs obtained from a graph $E$ in $\mathcal{K}(16, 14)$ which contains one of graphs in $\mathcal{E}_{16} \cup \left\{S_{12} \cup S_{4}, S_{12} \cup K_{3}\right\}$ as a subgraph by adding an isolated vertex $v$ and an arbitrary edge incident with $v$.\\
In case of (a.9) the graphs with $\delta(F) \geqslant 4$ and $e(F)=15$ are $G_{10} \vee \overline{K_1}, \overline{K_3} \vee P_{4}, \overline{K_2} \vee C_{5}, K_{6}, G_{41}, G_{42}$ and $G_{43}$, where $G_{41}-G_{43}$ are depicted in Figure 9. A simple observation shows that $\overline{C_{17}^{2}}$ contains copies of $G_{10} \vee \overline{K_1}, G_{41}, G_{42}, \overline{K_3} \vee P_{4}, \overline{K_2} \vee C_{5}, K_{6}^-$ and $G_{43}$ but does not contain a copy of $K_{6}$. As for the graphs in (b.9), there is no graph in (b.9) because of $\delta(F)=3$. As for the graphs in (c.9), note that if the graph $D$ contains a copy of $S_{13}$, then we do not need to consider $D^{+2}$ because of $\delta(F)=2$. Thus there is no graph in (c.9). As for the graphs in (d.9), note that if the graph $E$ contains one of graphs in $\mathcal{E}_{17}$ as a subgraph, then we do not need to consider $E^{+}$. Thus, without considering the above situations, it is not hard to show that $\overline{C_{17}^{2}}$ contains each graph left in (d.9) except those graphs in $\mathcal{L}_{17} \backslash\left\{S_{15}^{*}, S_{15} \cup K_{2}, S_{16}, S_{15}+e\right\}$. So we are done for $n=17$. Moreover, $\overline{C_{17}^{2}}$ contains each graph in $\mathcal{K}(17,13)\backslash\left\{S_{14}\right\}$  as a subgraph by Lemma \ref{lem1.6} and $\overline{C_{17}^{2}}$ contains each copy of $F \in \mathcal{K}(17, 14)$ unless $F$ contains $S_{14}$ as a subgraph by Lemma \ref{lem1.12}. Obviously, $\overline{C_{17}^{2}}$ contains each graph in $\mathcal{K}(17, 12)$ as a subgraph.
\begin{figure}[h!]\label{fig9}
\begin{center}
\psfrag{1}{$G_{41}$}\psfrag{2}{$G_{42}$}\psfrag{3}{$G_{43}$}
\includegraphics[width=90mm]{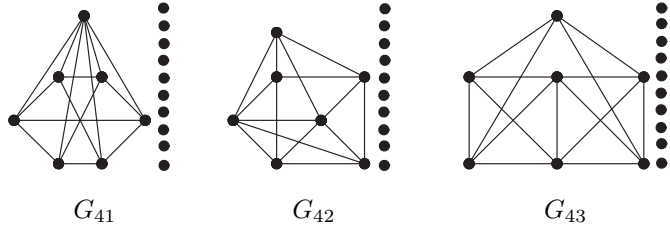} \\
  \caption{Graphs $G_{41}, G_{42}$ and $G_{43}$.}
\end{center}
\end{figure}

Let $n=18$. Let $t=\left\lceil\frac{n-1}{4}\right\rceil-1=\left\lceil\frac{18-1}{4}\right\rceil-1=4$. Regardless of isolated vertices of $F$, we consider the following five cases:\\
(a.10) $\delta(F) \geqslant t+1=5$. Then the number of non-isolated vertices of $F$ is at most $\left\lfloor\frac{2(n-2)}{\left\lceil\frac{n-1}{4}\right\rceil}\right\rfloor=6$.\\
(b.10) $\delta(F)=4$. Then by Proposition \ref{prop1.5}(ii), $\overline{C_{18}^{2}}$ contains each copy of $F \in \mathcal{K}(17, 12)^{+4}$.\\
(c.10) $\delta(F)=3$. Then by Proposition \ref{prop1.5}(ii), we only need to consider $F \in\left\{S_{14}\right\}^{+3}$.\\
(d.10) $\delta(F)=2$. Then by Proposition \ref{prop1.5}(ii), we only need to consider the graphs obtained from a graph $D$ in $\mathcal{K}(17, 14)$ which contains $S_{14}$ as a subgraph by adding an isolated vertex $v$ and two arbitrary edges incident with $v$.\\
(e.10) $\delta(F)=1$. Then by Proposition \ref{prop1.5}(i), we only need to consider the graphs obtained from a graph $E$ in $\mathcal{K}(17, 15)$ which contains one of graphs in $\mathcal{E}_{17} \cup \left\{S_{13} \cup S_{4}, S_{13} \cup K_{3}\right\}$ as a subgraph by adding an isolated vertex $v$ and an arbitrary edge incident with $v$.\\
In case of (a.10) there does not exist a graph with $\delta(F) \geqslant 5$ and $e(F)=16$. A simple observation shows that $\overline{C_{18}^{2}}$ contains copies of $K_{6} \cup K_{2}$ and $K_{6}^+$. As for the graphs in (c.10), there is no graph in (c.10) because of $\delta(F)=3$. As for the graphs in (d.10), note that if the graph $D$ contains a copy of $S_{14}$, then we do not need to consider $D^{+2}$ because of $\delta(F)=2$. Thus there is no graph in (d.10). As for the graphs in (e.10), note that if the graph $E$ contains one of graphs in $\mathcal{E}_{18}$ as a subgraph, then we do not need to consider $E^{+}$. Thus, without considering the above situations, it is not hard to show that $\overline{C_{18}^{2}}$ contains each graph left in (e.10) except those graphs in $\mathcal{L}_{18} \backslash\left\{S_{16}^{*}, S_{16} \cup K_{2}, S_{17}, S_{16}+e\right\}$. So we are done for $n=18$.
\end{proof}

\begin{lem}\label{lem1.14}
Let $n \geqslant 18$. Then $\overline{C_{n}^{2}}$ contains each copy of $F \in \mathcal{K}(n, n-2)\backslash \left\{S_{n-4} \cup S_4, S_{n-4} \cup K_3\right\}$ unless $F$ contains $S_{n-3}$ as a subgraph.
\end{lem}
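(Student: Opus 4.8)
The plan is to prove the lemma by induction on $n$, in close parallel with the proof of Lemma \ref{lem1.12}. The base case $n=18$ requires no new work: at $n=18$ we have $\mathcal{E}_{18}=\{S_{15}\}=\{S_{n-3}\}$ and $\{S_{14}\cup S_4,\,S_{14}\cup K_3\}=\{S_{n-4}\cup S_4,\,S_{n-4}\cup K_3\}$, so Lemma \ref{lem1.13} at $n=18$ already gives exactly the desired statement. For the inductive step assume the result for $n-1\ge 18$. Then $\overline{C_{n-1}^{2}}$ contains every $F'\in\mathcal{K}(n-1,n-3)$ except those containing $S_{n-4}$ and the two graphs $S_{n-5}\cup S_4,\,S_{n-5}\cup K_3$ (induction hypothesis); it contains every $F'\in\mathcal{K}(n-1,n-4)$ not containing $S_{n-4}$ (Lemma \ref{lem1.12}); and it contains every $F'\in\mathcal{K}(n-1,n-5)\setminus\{S_{n-4}\}$ together with every graph on $n-1$ vertices having at most $n-6$ edges (Lemma \ref{lem1.6} and the obvious monotonicity). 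Fix $F\in\mathcal{K}(n,n-2)$ with $\Delta(F)\le n-5$ (equivalently $F\not\supseteq S_{n-3}$) and $F\notin\{S_{n-4}\cup S_4,\,S_{n-4}\cup K_3\}$; set $t=\lceil\tfrac{n-1}{4}\rceil-1$ and, ignoring isolated vertices, split according to $\delta(F)$.

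If $\delta(F)\ge t+1$, a degree count shows $F$ has at most $\big\lfloor\tfrac{2(n-2)}{\lceil(n-1)/4\rceil}\big\rfloor\le 7$ non-isolated vertices; since $e(F)=n-2$, the case $n=19$ is vacuous (there is no graph on $6$ vertices with $17$ edges), and for $n\ge 20$ the graph $F$ embeds in $K_7$ or in $K_7^{-}$, both of which lie in $\overline{C_n^{2}}$ by Fact \ref{fact1.7}. If $1\le\delta(F)=d\le t$, choose $v$ with $d_F(v)=d$ and let $F'=F-v\in\mathcal{K}(n-1,n-2-d)\subseteq\mathcal{K}(n-1,n-3)$, so that $F$ is obtained from $F'$ by adding one vertex joined to $d\le t$ of its vertices; if $\overline{C_{n-1}^{2}}\supseteq F'$, then Proposition \ref{prop1.5}(ii) (choosing the $t$ new neighbours so as to include $N_F(v)$) gives $\overline{C_n^{2}}\supseteq F$. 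By the facts collected above, $\overline{C_{n-1}^{2}}\not\supseteq F'$ forces $d\le 3$ and either $F'\supseteq S_{n-4}$, or ($d=1$ and) $F'\in\{S_{n-5}\cup S_4,\,S_{n-5}\cup K_3\}$. In every such situation $F$ contains a star whose centre $c$ has degree $n-5$ or $n-6$, together with only a bounded number of further edges (at most three away from the star); since $\Delta(F)\le n-5$ one checks $d_F(c)$ is exactly this value and $v\notin N_F(c)$, and for $n\ge 19$ at least $n-11\ge 1$ neighbours of $c$ are leaves of $F$. Deleting one such leaf $\ell$ yields $F''=F-\ell\in\mathcal{K}(n-1,n-3)$ with $\Delta(F'')\le n-6$, hence $F''\not\supseteq S_{n-4}$; one inspects the few possible shapes of the extra edges to see that $F''$ equals $S_{n-5}\cup S_4$ or $S_{n-5}\cup K_3$ only when $F$ equals $S_{n-4}\cup S_4$ or $S_{n-4}\cup K_3$, which is excluded. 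Otherwise the induction hypothesis gives $\overline{C_{n-1}^{2}}\supseteq F''$, and since $F=F''+(\text{pendant at }c)$, Proposition \ref{prop1.5} delivers $\overline{C_n^{2}}\supseteq F$.

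The step I expect to demand the most care is this last inspection. One must check that after the first deletion the ``large star'' present in $F$ genuinely has a pendant leaf to spare — this is precisely where the hypothesis $n\ge 18$ enters, since for small $n$ all leaves of the star could be used up by the extra edges — and that regrowing that leaf inside $\overline{C_{n-1}^{2}}$ cannot accidentally recreate $S_{n-4}$, $S_{n-5}\cup S_4$, or $S_{n-5}\cup K_3$. In fact the graphs $S_{n-4}\cup S_4$ and $S_{n-4}\cup K_3$ surface as the two genuine exceptions exactly because, among all ways of placing three extra edges disjointly from a star $S_{n-4}$, a triangle and a $4$-vertex star are the only configurations whose ``reduced'' form is again a forbidden graph at level $n-1$; every other three-edge configuration reduces to a graph on $n-1$ vertices that the induction hypothesis handles. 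Apart from this bookkeeping the argument is the same routine peeling reduction already carried out in Lemmas \ref{lem1.12} and \ref{lem1.13}.
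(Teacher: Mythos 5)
Your proposal is correct and follows essentially the same route as the paper: induction on $n$ with base case $n=18$ from Lemma \ref{lem1.13}, a split on $\delta(F)$ versus $t=\lceil\frac{n-1}{4}\rceil-1$, Proposition \ref{prop1.5} together with the induction hypothesis and Lemmas \ref{lem1.6} and \ref{lem1.12} in the low-minimum-degree case, and Fact \ref{fact1.7} plus a vertex count in the high-minimum-degree case. The only difference is that you spell out (via the ``delete a spare leaf of the big star'' re-rooting) the verification that the paper compresses into ``it is easy to see that $\overline{C_n^2}$ contains each copy of $F$ unless $F\in\mathcal{L}_n\setminus\{\dots\}$'', which is a welcome elaboration rather than a new method.
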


\begin{proof}
We prove the lemma by induction on $n$. For $n=18$, the lemma follows from Lemma \ref{lem1.13}. Suppose it is true for $n-1$. Then by induction hypothesis, $\overline{C_{n-1}^{2}}$ contains each copy of $F \in \mathcal{K}(n-1, n-3)\backslash \left\{S_{n-5} \cup S_4, S_{n-5} \cup K_3\right\}$ unless $F$ contains $S_{n-4}$ as a subgraph. Moreover, $\overline{C_{n-1}^{2}}$ contains each graph in $\mathcal{K}(n-1, n-5) \backslash\left\{S_{n-4}\right\}$ and contains each graph in $\mathcal{K}(n-1, n-6)$ by Lemma \ref{lem1.6}. And $\overline{C_{n-1}^{2}}$ contains each copy of $F \in \mathcal{K}(n-1, n-4)$ unless $F$ contains $S_{n-4}$ as a subgraph by Lemma \ref{lem1.12}.

It is now sufficient to show that $\overline{C_{n}^{2}}$ contains each copy of $F \in \mathcal{K}(n, n-2)\backslash \left\{S_{n-4} \cup S_4, S_{n-4} \cup K_3\right\}$ unless $F$ contains $S_{n-3}$ as a subgraph. Let $t=\left\lceil\frac{n-1}{4}\right\rceil-1$. Regardless of isolated vertices of $F$, we consider the following two cases:\\
(a) $1 \leq \delta(F) \leq t$. Applying Proposition \ref{prop1.5}, it is easy to see that $\overline{C_{n}^{2}}$ contains each copy of $F$ unless $F \in \mathcal{L}_{n} \backslash\left\{S_{n-2} \cup K_{2}, S_{n-2}^{*}, S_{n-2}+e, S_{n-1}\right\}$, in which each graph $F$ contains either $S_{n-3}$ as a subgraph or $F \in\left\{S_{n-4} \cup S_4, S_{n-4} \cup K_3\right\}$.\\
(b) $\delta(F) \geqslant t+1$. Then the number of non-isolated vertices of $F$ is at most $\left\lfloor\frac{2(n-2)}{\left\lceil\frac{n-1}{4}\right\rceil}\right\rfloor$.\\
We know that $\overline{C_{n}^{2}}$ contains a copy of $K_{\lfloor\frac{n}{3}\rfloor}$ by Fact \ref{fact1.7}. Now, if $n \geqslant 22$, then we have $\left\lfloor\frac{2(n-2)}{\left\lceil\frac{n-1}{4}\right\rceil}\right\rfloor \leqslant\left\lfloor\frac{n}{3}\right\rfloor$. Thus $\overline{C_{n}^{2}}$ contains a copy of $F$ for $n \geqslant 22$. Let $n=19$. Then the number of non-isolated vertices of $F$ is at most $\left\lfloor\frac{2(n-2)}{\left\lceil\frac{n-1}{4}\right\rceil}\right\rfloor=6$ and $e(F)=17$. Clearly, such $F$ does not exist. Let $n=20$. Then the number of non-isolated vertices of $F$ is at most $\left\lfloor\frac{2(n-2)}{\left\lceil\frac{n-1}{4}\right\rceil}\right\rfloor=7$ and $e(F)=18$. Actually $F$ must be the graph $\overline{K_{2}} \vee (K_1 \vee C_4)$ because of $\delta(F)\geqslant\left\lceil\frac{n-1}{4}\right\rceil=5$ and one can easily check that $\overline{C_{20}^{2}}$ contains a copy of $F$. Let $n=21$. Then the number of non-isolated vertices of $F$ is at most $\left\lfloor\frac{2(n-2)}{\left\lceil\frac{n-1}{4}\right\rceil}\right\rfloor=7$ and $e(F)=19$. By Fact \ref{fact1.7}, $\overline{C_{21}^{2}}$ contains a copy of $K_{7}$ which has 21 edges and one can easily check that $\overline{C_{21}^{2}}$ contains a copy of $F$.

This completes the proof.
\end{proof}

\section{\normalsize Proof for Theorem \ref{thm1.1} and Theorem \ref{thm1.2}}

In this {section}, we will prove the main theorems based on the results in the last {section}.

\noindent{\bf Proof of Theorem \ref{thm1.1}:}
If $e(G) \geqslant C_{n-1}^{2}+3=\frac{n^{2}-3 n+8}{2}$, then we have that $G$ contains the $2$-power of a Hamilton cycle or $G$ is the graph $K_{n} \backslash E\left(S_{n-3}\right)$ by Theorem \ref{thm1.4}. Hence in this case the proof is done.\\
If $e(G)=\frac{n^{2}-3n+6}{2}=C_{n-1}^{2}+2$ and assume that $G$ does not contain the $2$-power of a Hamilton cycle, by Lemma \ref{lem1.12}, we have that the graph $K_{n} \backslash E\left(S_{n-3}\right)$ contains $G$ as a subgraph. Hence in this case the proof is done.\\
If $e(G)=\frac{n^{2}-3n+4}{2}=C_{n-1}^{2}+1$ and assume that $G$ does not contain the $2$-power of a Hamilton cycle, by Lemma \ref{lem1.14}, we have that $Y_n$ contains $G$ as a subgraph.

This completes the proof.
\qed

\noindent{\bf Proof of Theorem \ref{thm1.2}:}
Note that $$\mu(G) \geqslant \mu\left(K_{n} \backslash E\left(S_{n-3}\right)\right)>\mu\left(K_{n-1}\right)=n-2$$ by the Perron-Frobenius theorem. From Theorem \ref{thm1.9}, we have that $G$ contains a Hamilton cycle or $G=K_{n-1}+e$. Since $\mu\left(K_{n-1}+e\right)<\mu\left(K_{n} \backslash E\left(S_{n-3}\right)\right)$, we have $G$ contains a Hamilton cycle and obviously $G$ is connected.

Let $m=e(G)$. From Lemma \ref{lem1.8}, we have $\mu(G) \leqslant \sqrt{2m-n+1}$ with equality if and only if $G=S_{n}$ or $K_{n}$. Note that $\mu\left(S_{n}\right)=\sqrt{n-1}<n-2$, then if the equality holds, then $G=K_{n}$ and in this case the proof is done. If the strict inequality holds, then $2m-n+1>(n-2)^{2}$ and so $m>\frac{n^{2}-3 n+3}{2}$. By Theorem \ref{thm1.1}, we have that G contains the $2$-power of a Hamilton cycle or $Y_{n}$ contains $G$ as a subgraph, where $Y_{n}$ is a family of $K_{n} \backslash E\left(S_{n-3}\right), K_{n} \backslash E\left(S_{n-4} \cup S_4\right)$ and $K_{n} \backslash E\left(S_{n-4} \cup K_3\right)$. One can easily check that $$\mu\left(K_{n} \backslash E\left(S_{n-4}\cup S_4\right)\right)<\mu\left(K_{n} \backslash E\left(S_{n-3}\right)\right)$$ and $$\mu\left(K_{n} \backslash E\left(S_{n-4} \cup K_3\right)\right)<\mu\left(K_{n} \backslash E\left(S_{n-3}\right)\right).$$ Combining the Perron-Frobenius theorem and $\mu(G) \geqslant \mu\left(K_{n} \backslash E\left(S_{n-3}\right)\right)$, we have  that $G$ contains the $2$-power of a Hamilton cycle or $G=K_{n} \backslash E\left(S_{n-3}\right)$. This completes the proof.
\qed

\section{\normalsize Proof for Theorem \ref{thm1.3}}

In this section, we consider the relationship between the complement of a graph $G$ and its spectral radius $\mu(G)$ based on the concept of $k$-closure $\mathcal{C}_{k}(G)$, which is formally introduced by Bondy and Chvatal in \cite{5}.

\noindent{\bf Proof of Theorem \ref{thm1.3}:}
We prove by contradiction. For short, let $H=\mathcal{C}_{n}(G)$. Assume that $G$ does not contain the $2$-power of a Hamilton cycle $C_{n}^{2}$. Note that $\mu(\overline{G}) \leqslant \sqrt{n-5}<\sqrt{n-2}$ and $\overline{G}$ contains a copy of $\overline{H}$. By the Perron-Frobenius theorem, we obtain $\mu(\overline{H}) \leqslant \mu(\overline{G})<\sqrt{n-2}$. From Theorem \ref{thm1.9}, we have that $G$ contains a Hamilton cycle and obviously $G$ is connected because $G \neq K_{n-1}+e$ in this case $$\mu\left(\overline{K_{n+1}+e}\right)=\mu\left(S_{n-1}\right)=\sqrt{n-2}>\sqrt{n-5}.$$ Now we consider the following two cases:\\
(i) If $\overline{C_{n}^{2}}$ does not contain a copy of $\overline{H}$, we have that $e(\overline{H}) \geqslant n-4$ by Theorem \ref{thm1.4}. Now the main property of $\mathcal{C}_{n}(G)=H$ gives $d_{H}(u)+d_{H}(v) \leqslant n-1$ for every pair of nonadjacent vertices $u$ and $v$ of $H$; thus, $$d_{\overline{H}}(u)+d_{\overline{H}}(v)=n-1-d_{H}(u)+n-1-d_{H}(v) \geqslant n-1$$ for every edge $uv \in E(\overline{H})$. Summing these inequalities for all edges $uv \in E(\overline{H})$, we obtain $$\sum\limits_{uv \in E(\overline{H})}\left(d_{\overline{H}}(u)+d_{\overline{H}}(v)\right) \geqslant (n-1)e(\overline{H})$$ and since each term $d_{\overline{H}}(u)$ appears in the left-hand sum precisely $d_{\overline{H}}(u)$ times, we see that $$\sum\limits_{v \in V(\overline{H})} d_{\overline{H}}^{2}(v)=\sum\limits_{uv \in E(\overline{H})} \left(d_{\overline{H}}(u)+d_{\overline{H}}(v)\right) \geqslant (n-1)e(\overline{H}).$$ By Lemma \ref{lem1.10}, we have $$n\mu^{2}(\overline{H}) \geqslant \sum\limits_{v \in V(\overline{H})} d_{\overline{H}}^{2}(v) \geqslant (n-1)e(\overline{H}).$$ Since $\overline{H} \subset \overline{G}$, we have $\mu(\overline{H}) \leqslant \mu(\overline{G}) \leqslant \sqrt{n-5}$ and so, $$n(n-5) \geqslant n \mu^{2}(\overline{G}) \geqslant n \mu^{2}(\overline{H}) \geqslant (n-1)e(\overline{H}).$$ This easily gives $$e(\overline{H}) \leqslant \frac{n(n-5)}{n-1}<n-4, n \geqslant 18,$$ a contradiction.\\
(ii) From (i), we must have that $\overline{C_{n}^{2}}$ contains a copy of $\overline{H}$ but $\overline{C_{n}^{2}}$ does not contain a copy of $\overline{G}$. In this case our aim is to prove $H=K_{n}$. Assume that there exists an edge $uv \in E(\overline{G})$ satisfying $d_{\overline{G}}(u)+d_{\overline{G}}(v) \geqslant n-1$. Now we consider $\overline{G}$ with the smallest spectral radius satisfying that $\overline{C_{n}^{2}}$ does not contain a copy of $\overline{G}$. Firstly, we consider three graphs $H_{1}, H_{2}$ and $H_{3}$ with $n$ vertices and $n-2$ edges, which are depicted in Figure 10. Let $g(\lambda), h(\lambda)$ and $f(\lambda)$ be the characteristic polynomial of $H_{1}, H_{2}$ and $H_{3}$ respectively. One can easily know that $$g(\lambda)=\lambda^{5}-(b+2c+a+1) \lambda^{3}-2c \lambda^{2}+(ab+ac+bc) \lambda$$ and $$h(\lambda)=\lambda^{5}-(b+2c+a+1) \lambda^{3}-(2c-2) \lambda^{2}+(ab+ac+bc+2c-1) \lambda,$$ thus $g(\lambda)-h(\lambda)=-\lambda(2c-1+2 \lambda)$. Note that $\mu(G)$ is the largest root of the characteristic polynomial of $G$. Thus it is obvious that $\mu\left(H_{2}\right)<\mu\left(H_{1}\right)$. Then it is clearly that $\overline{G}$ with the smallest spectral radius satisfying that $\overline{C_{n}^{2}}$ does not contain a copy of $\overline{G}$ is exactly $H_{3}$. Similarly, we have $f(\lambda)=\lambda^{4}+(-b-1-a) \lambda^{2}+ab$ with $a+b+2=n-1$ and at this time $$\mu(\overline{G})=\mu\left(H_{3}\right)=\frac{\sqrt{2+2(n-3)+2 \sqrt{2(n-3)+1+(a-b)^{2}}}}{2}.$$ To simplify the proof, we denote $H_{3}$ by $T_{a, b}$ relying on $a$ and $b$ satisfying $a+b+2=n-1$. By Lemma \ref{lem1.14}, we have that $\overline{G}=T_{n-5, 2}$ and $$\mu\left(T_{n-5, 2}\right)=\frac{\sqrt{2n-4+2 \sqrt{n^{2}-12n+44}}}{2}>\sqrt{n-4}>\sqrt{n-5},$$ a contradiction. Consequently, we have $d_{G}(u)+d_{G}(v) \geqslant n$ for every pair of nonadjacent vertices $u$ and $v$ of $G$. Then $H=\mathcal{C}_{n}(G)=K_{n}$.

This completes the proof.
\qed
\begin{figure}[h!]\label{fig10}
\begin{center}
\psfrag{0}{$\cdots$}\psfrag{1}{$H_{1}$}\psfrag{2}{$H_{2}$}\psfrag{3}{$H_{3}$}\psfrag{u}{$u$}\psfrag{v}{$v$}\psfrag{a}{$a$}\psfrag{b}{$b$}\psfrag{c}{$c$}
\psfrag{d}{$n-2-a-b-c$}\psfrag{e}{$a+1$}\psfrag{f}{$c-1$}\psfrag{g}{$b+1$}\psfrag{h}{$n-3-a-b-c$}
\includegraphics[width=100mm]{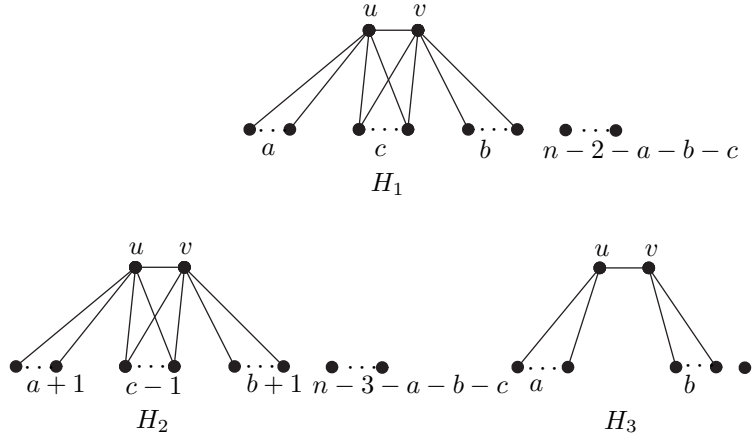} \\
  \caption{Graphs $H_{1}, H_{2}$ and $H_{3}$.}
\end{center}
\end{figure}

In this paper, we consider the spectral conditions for a graph containing $C_n^2$ as a subgraph. What can we say for the problems for  $C_n^k$? Furthermore, what happens if the minimum degree is fixed? We leave them for further research.
We will also consider maximum signless laplacian ($p$-Laplacian) spectral radius version of the problem among the same family of graphs in the near future.


\section*{Acknowledgments}
This work is supported by NSFC (Nos. 11871479, 12001544, 12071484) and Natural Science Foundation of Hunan Province (Nos. 2020JJ4675,  2021JJ40707).

{
}


\begin{thebibliography}{99}
\small \setlength{\itemsep}{-.8mm}
\bibitem{bondy} J.A. Bondy, U.S.R. Murty, Graph Theory, in: Graduate Texts in Mathematics, Vol. 244, Springer, 2008.
\bibitem{12} J.A. Bondy, Properties of graphs with constraints on degrees, Studia Sci. Math. Hunger. 4 (1969) 473-475.

\bibitem{5} A. Bondy, V. Chvatal, A method in graph theory, Discrete Math. 15 (1976) 111-135.
\bibitem{8} D. de Caen, An upper bound on the sum of squares of degrees in a graph, Discrete Math. 185 (1998) 245-248.

\bibitem{13} V. Chvatal, On Hamilton's ideals, J. Combin. Theory Ser. B 12 (1972) 163-168.
\bibitem{9} D. Cvetkovi\'{c}, M. Doob, H. Sachs, Spectra of Graphs--Theory and Application,  Heidelberg, 1995.
\bibitem{10} D. Cvetkovi\'{c}, P. Rowlinson, S. Simi\'{c}, Signless Laplacians of finite graphs, Linear Algebra Appl. 423 (2007) 155-171.



\bibitem{dirac} G. A. Dirac, Some theorems on abstract graphs, Proc. London Math. Soc. 2 (1952)  68-81.
\bibitem{erdos} P. Erd{\"o}s, Problem 9, in ``Theory of Graphs and Its Applications'' (M. Fieldler, Ed.),p. 159, Czech. Acad. Sci. Publ., Prague, 1964.

\bibitem{f1} G.H. Fan, R. H{\"a}ggkvist, The square of a hamiltonian cycle, SIAM J. Discrete Math. 7 (1994) 203-212.
\bibitem{f2} G.H. Fan, H.A. Kierstead, The square of paths and cycles, J. Combinatorial Theory Ser. B 63 (1995) 55-64.
\bibitem{f3} G.H. Fan, H.A. Kierstead, Hamiltonian Square-Paths, J. Combinatorial Theory Ser. B 67 (1996) 167-182.

\bibitem{fau} R.J. Faudree, R.J. Gould, M.S. Jacobson, R.H. Schelp, On a problem of Paul Seymour, in: V.R. Kulli (Ed.), Recent Advances in Graph Theory, Vishwa
International Publication, 1991, pp. 197-215.
\bibitem{19} L.H. Feng, P.L. Zhang, H. Liu, W.J. Liu, M.M. Liu, Y.Q. Hu, Spectral conditions for some graphical properties, Linear Algebra Appl. 524 (2017) 182-198.
\bibitem{4} M. Fiedler, V. Nikiforov, Spectral radius and Hamiltonicity of graphs, Linear Algebra Appl. 432 (2010) 2170-2173.





\bibitem{gao} J. Gao, X. Hou, The spectral radius of graphs without long cycles, Linear Algebra Appl. 566 (2019) 17-33.
\bibitem{11} J. van den Heuvel, Hamliton cycles and eigenvalues of graphs, Linear Algebra Appl. 226-228 (1995) 723-730.
\bibitem{2} M. Hofmeister, Spectral radius and degree sequence, Math. Nachr. 139 (1988) 37-44.
\bibitem{1} Y. Hong, A bound on the spectral radius of graphs, Linear Algebra Appl. 108 (1988) 135-139.
\bibitem{3} Z.U. Khan, L.T. Yuan, A note on the $2$-power of Hamilton cycles, submitted to Discrete Math.
\bibitem{kie} H.A. Kierstead, J. Quintana, Square Hamiltonian cycles in graphs with maximal 4-cliques, Discrete Math. 178 (1998) 81-92.
\bibitem{k1} J. Koml\'{o}s, G.N. S\'{a}rk{\"o}zy, E. Szemer\'{e}di, On the Posa-Seymour conjecture, Journal of Graph Theory 29 (1998) 167-176.
\bibitem{k2} J. Koml\'{o}s, G.N. S\'{a}rk{\"o}zy, E. Szemer\'{e}di, On the square of a Hamiltonian cycle in dense graphs, Random Structures and Algorithms 9 (1996) 193-211.
\bibitem{k3} J. Koml\'{o}s, G.N. S\'{a}rk{\"o}zy, E. Szemer\'{e}di, Proof of the Seymour conjecture for large graphs, Annals of Combinatorics 2 (1998) 43-60.


\bibitem{L} I. Levitt, G.N. S\'{a}rk{\"o}zy, E. Szemer\'{e}di, How to avoid using the Regularity Lemma: P\'{o}sa's conjecture revisited, Discrete Math. 310 (2010) 630-641.

   \bibitem{Libinlong1}  B.L.   Li, B. Ning, Spectral analogues of Erd\"os' and Moon-Moser's theorems on Hamilton cycles,  Linear Multilinear Algebra 64(11) (2016) 2252--2269.

 \bibitem{Libinlong2} B.L.   Li, B. Ning, X. Peng,  Extremal problems on the Hamiltonicity of claw-free graphs, Discrete Math. 341(10) (2018)  2774--2788.


 \bibitem{Linhuiqiu1}   H.Q. Lin,   H.T. Guo,   A spectral condition for odd cycles in non-bipartite graphs,  Linear Algebra Appl. 631 (2021)  83--93.

 \bibitem{Linhuiqiu2}  H.Q. Lin,  B. Ning, B. Wu,  Eigenvalues and triangles in graphs,  Combin. Probab. Comput. 30(2) (2021) 258--270.

\bibitem{15} B. Mohar, A domain monotonicity theorem for graphs and hamiltonicity, Discrete Appl. Math. 36 (1992) 169-177.
\bibitem{6} V. Nikiforov, The sum of the squares of degrees: sharp asymptotics, Discrete Math. 307 (2007) 3187-3193.
\bibitem{ni} V. Nikiforov, The spectral radius of graphs without paths and cycles of specified length, Linear Algebra Appl. 432 (2010) 2243-2256.
\bibitem{7} O. Ore, Note on Hamilton circuits, Amer. Math. Monthly 67 (1960) 55.
\bibitem{14} O. Ore, Arc coverings of graphs. Ann. Mat. Pura Appl. (4) 55 (1961) 315-321.
\bibitem{16} R. Stanley, A bound on the spectral radius of graphs with $e$ edges, Linear Algebra Appl. 87 (1987) 267-269.
\bibitem{sey} P. Seymour, Problem section, in: T.P. McDonough, V.C. Mavron (Eds.), Combinatorics: Proceedings of the British Combinatorial Conference 1973,
Cambridge University Press, 1974, pp. 201-202.
\bibitem{18} J.F. Wang, J. Wang, X.G. Liu, F. Belardo, Graphs whose $A_{\alpha}$-spectral radius does not exceed $2$, Discuss. Math., Graph Theory 40 (2020) 677-690.

\bibitem{20} M.Q. Zhai, B. Wang, L.F. Fang, The spectral Tur\'{a}n problem about graphs with no $6$-cycle, Linear Algebra Appl. 590 (2020) 22-31.

\bibitem{17} Y.J. Zhu, F. Tian, A Generalization of the Bondy-Chvatal Theorem on the $k$-Closure, J. Combin. Theory Ser. B 35 (1983) 247-255.
\end{thebibliography}
\end{document}